\DeclareMathAlphabet{\mathpzc}{OT1}{pzc}{m}{it}
\theoremstyle{remark}
\newtheorem{remark}{\bf Remark}[section]
\newcommand{\expv}[1]{\mathbb{E}\left[ #1 \right]}
\newcommand{\expvN}[1]{\mathbb{E}_N\left[ #1 \right]}
\newcommand{\rd}{\mathbb{R}^d}
\newcommand{\tr}[1]{\textup{Tr}\left[#1\right]}
\newcommand{\rdd}{\mathbb{R}^{d\times d}}
\newcommand{\F}{\mathcal{F}}
\newcommand{\Ad}{\mathcal{A}_d}
\newcommand{\N}{\mathbb{N}}
\newcommand{\I}{\mathbb{I}}
\newcommand{\bN}{[\![N ]\!]}
\newcommand{\bM}{[\![M ]\!]}
\newcommand{\A}{\mathcal{A}}
\newcommand{\p}{\mathbb{P}}
\newcommand{\pp}{\mathcal{P}}
\newcommand{\xN}[1]{\mathbb{E}_N\left[#1\right]}
\newcommand{\x}[1]{\mathbb{E}\left[#1\right]}
\newcommand{\ve}{\varepsilon}
\newcommand{\f}{\mathbb{F}}
\newcommand{\T}{\mathbb{T}}
\begin{document}

\newtheorem{theorem}{\bf Theorem}[section]

\newtheorem{definition}{\bf Definition}[section]
\newtheorem{corollary}{\bf Corollary}[section]

\newtheorem{lemma}{\bf Lemma}[section]
\newtheorem*{claim}{\bf Claim}
\newtheorem{assumption}{Assumption}
\newtheorem{condition}{\bf Condition}[section]
\newtheorem{proposition}{\bf Proposition}[section]
\newtheorem{definitions}{\bf Definition}[section]
\newtheorem{problem}{\bf Problem}
\numberwithin{equation}{section}
\newcommand{\skp}{\vspace{\baselineskip}}
\newcommand{\noi}{\noindent}
\newcommand{\osc}{\mbox{osc}}
\newcommand{\lfl}{\lfloor}
\newcommand{\rfl}{\rfloor}

\theoremstyle{remark}
\newtheorem{example}{\bf Example}[section]

\newcommand{\inpr}[2]{\left< #1, #2 \right>}
\newcommand{\prodrd}[2]{\left(\;#1\;,\; #2\;\right)}
\newcommand{\wx}[1]{\widetilde{\mathbb{E}}\left[ #1 \right]}
\newcommand{\indi}[1]{\mathbb{I}_{#1}}
\newcommand{\note}[1]{\begin{addmargin}[2em]{2em}\underline{Note:}\\ #1 
\end{addmargin}}
\newcommand{\conv}[3]{\left(#1 \ast #2\right)(#3)}
\newcommand{\asdd}{\,\textup{d}}
\newcommand{\asde}{\textup{d}}
\newcommand{\wtm}{\widetilde{m}}
\newcommand{\wtb}{\widetilde{b}}
\newcommand{\wtu}{\widetilde{u}}
\newcommand{\wha}{\widehat{a}}
\newcommand{\whb}{\widehat{b}}
\newcommand{\Adet}{\mathcal{A}_{\text{det}}}
\newcommand{\leqnomode}{\tagsleft@true}
\newcommand{\reqnomode}{\tagsleft@false}
\newcommand{\V}{\mathbb{V}}
\newcommand{\Z}{\mathcal{Z}}
\newcommand{\B}{\mathcal{B}}
\newcommand{\R}{\mathcal{R}}
\newcommand{\Ie}{\mathbb{I}_{E_\epsilon}}
\newcommand{\aeq}[1]{\begin{equation}\begin{aligned}#1\end{aligned}\end{equation}}
\newcommand{\vop}[1]{#1, \p_{#1}}
\newcommand{\M}{\mathcal{M}}
\newcommand{\h}{\mathcal{H}}
\newcommand{\Y}{\bar{Y}}

\newcommand{\beginsec}{
\setcounter{lemma}{0} \setcounter{theorem}{0}
\setcounter{corollary}{0} \setcounter{definition}{0}
\setcounter{example}{0} \setcounter{proposition}{0}
\setcounter{condition}{0} \setcounter{assumption}{0}
\setcounter{remark}{0} }

\numberwithin{equation}{section} \numberwithin{lemma}{section}

\begin{frontmatter}

\title{Mean-field type modeling of nonlocal crowd 
aversion in 
pedestrian crowd dynamics\thanks{This work is partially supported by the Swedish 
Research Council via 
Grant:~2016-04086.} }

\author{Alexander Aurell \thanks{Department of Mathematics, KTH Royal Institute 
of Technology, 100 44 
Stockholm, Sweden. \newline E-mail address: aaurell@kth.se}
\qquad 
Boualem Djehiche \thanks{Department of Mathematics, KTH Royal Institute 
of Technology, 100 44 
Stockholm, Sweden. \newline E-mail address: boualem@kth.se}
}
\runtitle{Tagged pedestrian}
\runauthor{Aurell, Djehiche}


\skp

\begin{abstract}
We extend the class of pedestrian crowd models introduced by 
Lachapelle and Wolfram (2011) to allow for nonlocal crowd 
aversion and 
arbitrarily but finitely many interacting crowds. The new crowd 
aversion feature grants pedestrians a 'personal space' where crowding is 
undesirable. We derive the model from a particle picture and 
treat it as a mean-field type game. Solutions to the mean-field type game are 
characterized via a Pontryagin-type Maximum Principle. The behavior of 
pedestrians acting under nonlocal crowd aversion is illustrated by a numerical 
simulation.

\noi {\bf MSC 49N90, 60G09, 60H10, 60H30, 60K35}

\noi {\bf Keywords: Crowd dynamics, crowd aversion, mean-field approximation, 
interacting populations, optimal control, mean-field type game}
\end{abstract}

\end{frontmatter}

\section{Introduction}
\label{sec:introduction}
When moving in a crowd, a pedestrian chooses its path based not only on its 
desired final destination but it also takes the movement of other 
surrounding pedestrians into account. The bullet points below are 
stated in \cite{naldi2010mathematical} as typical traits of pedestrian 
behavior.
\\
\begin{itemize}
 \item Will to reach specific targets. Pedestrians experience a strong 
interaction with the environment.
\item Repulsion from other individuals. Pedestrians may agree to deviate from 
their preferred path, looking for free surrounding room.
\item Deterministic if the crowd is sparse, partially random if the crowd is 
dense.\\
\end{itemize}
These properties appear in classical particle models. Other authors advocate 
smart particle models that follow decision-based dynamics. In 
\cite{naldi2010mathematical} some fundamental differences between classical and 
smart particle models are outlined. We list a few of them in Table 1.
\begin{table}[H]
\resizebox{\columnwidth}{!}{
 \begin{tabular}{ l | l}
  Classical & Smart
  \\ \hline
  Robust - interaction only through collisions & Fragile - avoidance of 
collisions and obstacles
 \\
 Blindness - dynamics ruled by inertia & Vision - dynamics ruled at least 
partially by decision
 \\
 Local - interaction is pointwise & Nonlocal - interaction at a distance
 \end{tabular}}
   \caption{}
  \label{tab:tab1}
\end{table}
\noindent
A smart particle model lets pedestrians decide blue where to walk, 
with what 
speed etc. The choice is based on some rule that takes the available 
information 
into account such as the positioning and movement of other pedestrians. 
Although more realistic, this approach has complications. If pedestrian $i$ 
moves, all pedestrians accessing information on $i$'s state might have to adapt 
their movements. The large number of connections where information is exchanged 
within a crowd is a computational difficulty. 
\\~\\
The mean-field approach to modeling crowd aversion and congestion 
for pedestrians was introduced in \cite{lachapelle2011mean}. The pedestrians 
are treated as particles following decision-based dynamics that 
optimize their path by avoiding densely crowded 
areas. Crowd aversion describes motion avoiding high density 
whereas congestion describes motion hindered by high density. The 
theory of mean-field games originates from the independent works of Lasry-Lions 
\cite{lasry2007mean} and Huang-Caines-Malham{\'e} \cite{huang2006large}. 
The cost considered in this early work is not of congestion type, i.e. the 
energy penalization is independent of the density. The framework 
was extended to several populations on the torus in \cite{feleqi2013derivation} 
and to several populations on a bounded domain with reflecting boundaries in 
\cite{cirant2015multi}, with further studies in \cite{achdou2017mean, 
bardi2017uniqueness}. Mean field games with a cost of congestion type was 
introduced by P-L. Lions in a lecture series 2011 \cite{video}. Congestion has 
also been studied in the mean-field type. In \cite{achdou2015system} the finite 
horizon case is considered. In \cite{achdou2016mean, 2016arXiv161102023A} the 
authors prove existence and uniqueness of weak solutions characterized by an 
optimization approach based on duality, and propose a numerical method for 
mean-field type control based on this result for the case of local congestion.
\\~\\
Turning to the crowd aversion model of this paper, a pedestrian 
with 
position $X^{i,N}$ in a crowd of $N$ pedestrians controls its velocity such 
that its risk 
measure, $J^{i,N}$, is minimized over a finite time horizon $[0,T]$. The risk 
measure penalizes proximity to others, energy waste and 
failure to reach a target 
area. In this paper we advocate for the use of the following nonlocal 
contribution to the risk measure, reflecting a crowd aversion 
behavior,
\begin{equation}
\label{eq:introeq}
\xN{\int_0^T \frac{1}{N-1}\sum_{\substack{j=1\\j\neq 
i}}^N\phi_r\left(X^{i,N}_t 
- X_t^{j,N}\right)dt}.
\end{equation}
\noindent
The `personal space' of a typical pedestrian is modeled by the function 
$\phi_r$ and $X^{i,N}_t - X^{j,N}_t$ is the distance between two pedestrians at 
time $t$. The personal space has support 
within a ball of radius $r$ so for positive $r$, \eqref{eq:introeq} is a 
weighted average of the crowding within the personal space and the pedestrian 
is not effected by crowding outside it. Connecting to the terminology in Table 
\ref{tab:tab1}, the case of positive $r$ will be referred to as \textit{nonlocal 
crowd aversion}. In the limit $r\rightarrow 0$ the personal space shrinks to 
a singleton and only pointwise crowding, that is collisions, will effect the 
pedestrian. This will be referred to as \textit{local crowd 
aversion}.
\\~\\
In emergency situations it is often in the interest of all pedestrians to get 
to a certain place, such an exit. In evacuation planning and 
crowd management at mass gatherings, it is in the interest of the planner to 
control the crowd along paths and towards certain areas. Common to such 
situations is the conflict between attraction to said locations 
and repulsive interactions in the crowd. Pedestrians acting under nonlocal 
crowd aversion will order themselves more densely in such places 
compared to pedestrians acting under local crowd aversion. 
This effect is caused by the larger personal space, the nonlocal 
crowd aversion term \eqref{eq:introeq} is an average over a 
bigger set hence allowing for higher densities in attractive areas. Higher 
densities will in turn allow for more effective emergency planning when 
designing for example 
escape routes. The numerical simulation in the end of this paper confirms this 
effect. The pedestrians are allowed to move freely, but the 
observed effect will become even more beneficial for a planner when introducing 
an environment for the pedestrians to interact with. In reality, crowd 
management is often done by the strategic placement of obstacles such as 
pillars and walls. Furthermore, the pedestrians acting under nonlocal 
crowd aversion travel at an overall lower risk than 
their local 
counterpart. This suggests that a crowd with nonlocal crowd averse 
behavior could potentially move at a higher velocity than its local 
counterpart which allows for faster and more 
successful evacuations.
\\~\\
In \cite{lachapelle2011mean} the mean-field optimal 
control is characterized through a matching argument. This control is an 
approximate Nash equilibrium for the crowd. It is, for each pedestrian, the 
best response to the movement of the rest of the crowd. Furthermore, two 
crowds are considered where each pedestrian has crowd-specific preferences 
such as the target location and crowd aversion preference. 
The authors set up a mean-field game and show that it is equivalent to 
an optimal control problem. In this paper, we look at the crowd from the 
bird's-eye view of an evacuation planner. We seek a `simultaneous' optimal 
strategy for all the pedestrians involved in the crowd through a mean-field 
type control approach for the single-crowd case and a mean-field 
type game approach for the multi-crowd case.
\\~\\
The contributions of this paper are the following. We identify a particle model 
 that is approximated by mean-field model for crowd aversion  
proposed in \cite{lachapelle2011mean}. This gives us insights 
into how the interaction between pedestrians in the crowd effects the 
mean-field model and reveals that the crowd of \cite{lachapelle2011mean} has a 
local crowd averse behavior. Our second contribution is a 
relaxation of the locality of the pedestrian model by allowing for interaction 
between pedestrians at a distance. Each pedestrian is given a personal space 
where it dislikes crowding, instead of interacting with other pedestrians only 
through collisions. This conceptual change is realistic since pedestrians 
do not need to be in physical contact to interact. As discussed 
above, the suggested nonlocal crowd aversion model allows for 
the following desirable features:\\
\begin{itemize}
 \item Higher densities in target areas such as exits or escape 
routes where the pedestrians have to choose between more crowding
and not reaching the target.
 \item Lower risk, which implies a potential increase in 
pedestrian velocity allowing for faster exits and a larger flow of people, a 
very useful feature in the design of evacuation strategies. \\
\end{itemize}
\noindent
Finally, we generalize the model to allow for an 
arbitrary number of interacting crowds. This multi-crowd scenario 
is treated as a mean-field type game and is linked to an 
optimal control problem, for which we prove a sufficient maximum principle. 
\\~\\
The paper is organized as follows. After a short section of 
preliminaries, we consider the single-crowd case in Section 
\ref{sec:singlecrowd}. In Section \ref{sec:multiplecrowd}, the 
multi-crowd case is studied. The results derived in Section 
\ref{sec:singlecrowd} generalize to an arbitrary finite number of interacting 
crowds and a sufficient maximum principle that characterizes the 
solution is proved. An example that highlights the difference between local and 
nonlocal crowd aversion is solved numerically in Section 
\ref{sec:numexp}. For the sake of clarity, all technical proofs are moved to an 
appendix.

\section{Preliminaries}
\label{sec:preliminaries}
Given a general Polish space $\mathcal{S}$, let $\pp(\mathcal{S})$ denote 
the space of probability measures on $\mathcal{B}(\mathcal{S})$. For an 
element $s\in\mathcal{S}$, the Dirac measure on $s$ is an element of 
$\pp(\mathcal{S})$ and will be denoted by $\delta_s$.  Let $\pp(\mathcal{S})$ be 
equipped with the topology of weak convergence of probability measures. A metric 
that induces this topology is the bounded $1$-Lipschitz metric,
\begin{equation}
\label{eq:metric_P}
 d_{\pp(\mathcal{S})}(\mu,\nu) := \|\mu-\nu\|_1 = 
\sup_{f\in L_1}\left<\mu,f\right> - \left<\nu,f\right>,
\end{equation}
where $L_1$ is the set of real-valued functions on $\mathcal{S}$ bounded by 
$1$ and with Lipschitz coefficient 1. With his metric, $\pp(\mathcal{S})$ is a 
Polish space. The space of probability measures on 
$\mathcal{B}(\mathcal{S})$ with finite second moments will be denoted by 
$\pp_2(\mathcal{S})$,
\begin{equation}
 \pp_2(\mathcal{S}) :=\left\{\nu\in\pp(\mathcal{S})\ :\ \exists\,s_0 \in 
\mathcal{S}\text{ that satisfies } \int_{\mathcal{S}}d_{\mathcal{S}}(s, 
s_0)^2\nu(ds) < \infty\right\}.
\end{equation}
Equipped with the topology of weak convergence of measures and 
convergence of second moments, $\pp_2(\mathcal{S})$ is a Polish space. A 
compatible complete metric is the square Wasserstein metric 
$d_{\pp_2(\mathcal{S})}$, for which the following inequalities will be useful.
For all $s_i, \widetilde{s}_i  \in \mathcal{S}$ and for all $N\in\mathbb{N}$,
\begin{equation}
\label{eq:measureinequality}
 d_{\pp_2(\mathcal{S})}^2\left(\frac{1}{N}\sum_{i=1}^N\delta_{s_i}, 
\frac{1}{N}\sum_{i=1}^N\delta_{\widetilde{s}_i}\right) \leq 
\frac{1}{N}\sum_{i=1}^Nd_{\mathcal{S}}(s_i, \widetilde{s}_i)^2.
\end{equation}
For random variables $X$ and $\widetilde{X}$ with distributions $\nu$ and $\widetilde{\nu}$,
\begin{equation}
\label{eq:Wasserstein_ineq}
 d_{\pp_2(\mathcal{S})}^2(\nu,\widetilde{\nu}) \leq 
\x{|X-\widetilde{X}|^2}.
\end{equation}
Let $T>0$ be a finite time horizon and let $\mathbb{R}^d$, $d\in\N$, be 
equipped with the Euclidean norm. Let $\mathcal{M}$ and $\mathcal{M}_2$ be the 
spaces of continuous functions on 
$[0,T]$ with values in $\pp(\rd)$ and $\pp_2(\rd)$ respectively,
\begin{equation}
 \mathcal{M} := C([0,T]; \mathcal{P}(\rd)), \qquad \mathcal{M}_2 := C([0,T]; 
\mathcal{P}_2(\rd)).
\end{equation}
Equipped with the uniform metrics $d_{\mathcal{M}}$ and $d_{\mathcal{M}}$,
\begin{equation}
\label{eq:probmetric}
 d_{\mathcal{M}}(m,m') := \sup_{t\in[0,T]}d_{\pp(\rd)}(m_t, m'_t),\quad 
d_{\mathcal{M}_2}(m,m') := \sup_{t\in[0,T]}d_{\pp_2(\rd)}(m_t, m'_t),
\end{equation}
$\mathcal{M}$ and $\mathcal{M}_2$ are Polish spaces. The mathematical 
results stated above can be found in \cite[Chapter 
2]{parthasarathy1967probability} and \cite[Chapter 
14]{kallenberg2006foundations}. 
\\~\\
Let $A$ be a compact subset of $\mathbb{R}^d$. Given a filtered 
probability 
space $(\Omega, \mathcal{F}, \f, \mathbb{P})$, denote by $\A$ the set of 
$A$-valued $\f$-adapted processes such that 
\begin{equation}
\x{\int_0^T 
|a_t|^2dt} < \infty.
\end{equation}
An element of $\A$ will be called an \textit{admissible control}. From the 
context, it will be clear which stochastic basis the notation $\mathcal{A}$ is 
referring to.
\\~\\
Given a vector $x = (x^1,\dots, x^N)$ in the product space $\mathcal{S}^N$ 
and an element $y\in\mathcal{S}$, we let
\begin{equation}
 \begin{aligned}
  x^{-i} &:= (x^1,\dots, x^{i-1}, x^{i+1}, \dots, x^N),
  \\
  (y,x^{-i}) &:= (x^1,\dots, x^{i-1}, y, x^{i+1}, \dots, x^N).
 \end{aligned}
\end{equation}
Furthermore, the law of any random quantity $X$ will be 
denoted by $\mathcal{L}(X)$ and any index set of the form $\{1, \dots, N\}$ 
will be denoted by $\bN$.

\section{Single-crowd model for crowd aversion}
\label{sec:singlecrowd}
\subsection{The particle picture}
Let $(\Omega_N, \mathcal{F}^N, \f^N, \mathbb{P}_N)$ be a 
complete filtered probability space for each $N\in\mathbb{N}$. The 
filtration $\f^N$ is right-continuous and augmented with $\mathbb{P}_N$-null 
sets. It carries the independent $d$-dimensional $\f^N$-Wiener processes 
$W^{1,N}, \dots, W^{N,N}$. Let, for each $i\in\bN$, the 
$\mathcal{F}_0^N$-measurable $\rd$-valued random variable $\xi^{i,N}$ be  
square-integrable and independent of $(W^{1,N},\dots, W^{N,N})$. Given a vector 
of admissible controls, $\bar{a}^N = (a^{1,N}, \dots, a^{N,N})\in\A^N$, 
consider the system 
\begin{equation}
 \label{eq:sdesystem1}
 dX_t^{i,N} = b(t,X^{i,N}_t,a^{i,N}_t)dt + \sigma(t,X^{i,N}_t)dW^{i,N}_t,\quad 
X_0^{i,N} = \xi^{i,N}, \quad i\in\bN.
\end{equation}

\begin{proposition}
\label{prop:exists_sol}
 Assume that
 \begin{enumerate}
\item[(A1)] $b : [0,T]\times\rd\times A \rightarrow \rd$ and 
$\sigma:[0,T]\times\rd\rightarrow\rdd$ are continuous in all 
arguments.
\item[(A2)] For all $x_1,x_2\in\rd$ and $a_1, a_2\in A$, there exists a 
constant 
$K>0$ independent of $(t,x_1,x_2,a_1,a_2)$ such that
\begin{equation*}
 \begin{aligned}
  |b(t,x_1,a_1) - b(t,x_2,a_2)| 
  &\leq 
  K(|x_1 - x_2| + |a_1 - a_2|),
  \\
 |\sigma(t,x_1) - \sigma(t,x_2)|
 &\leq K|x_1 - x_2|,
 \\
  |b(t,x_1,a_1)| + |\sigma(t,x_1)| 
  &\leq 
  K(1 + |x_1|+|a_1|).
 \end{aligned}
\end{equation*}
\end{enumerate}
 Under these assumptions, \eqref{eq:sdesystem1} has a unique 
strong 
solution in the sense that
\begin{align}
 &X_0^{i,N} = \xi^{i,N},
 \\
 \label{eq:bound_on_b}
 &\int_0^t \left|b(s, X^{i,N}_s, a^{i,N}_s)\right| + \left|\sigma(s, 
X^{i,N}_s)\right|^2ds 
< \infty,\quad t\in[0,T],\ \p-a.s.
\\
&X^{i,N}_t = \xi^{i,N} + \int_0^tb(s, X^{i,N}_s, a^{i,N}_s)ds + \int_0^t 
\sigma(s, X_s^{i,N})dW^{i,N}_s,\quad t\in[0,T].
\end{align}
Furthermore, the strong solution $X^{i,N}$ satisfies the estimate
$\xN{\sup_{s\in[0,t]}|X_s^{i,N}|^2} \leq K_t\left(1 + 
\xN{|\xi^{i,N}|^2}\right) $
for all $t\in[0,T]$, for all $i\in\bN$ and for some positive constant $K_t$ 
depending only on $t$.
\end{proposition}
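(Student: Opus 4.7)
The plan is to observe first that, even though the admissible controls $a^{i,N}$ may be $\f^N$-adapted in an arbitrary way (and thus could in principle depend on the whole configuration), once $\bar a^N\in\A^N$ is fixed the system \eqref{eq:sdesystem1} is completely decoupled: the $i$-th equation involves only $X^{i,N}$, the driver $W^{i,N}$, and the prescribed process $a^{i,N}$. Hence it suffices to prove existence, uniqueness and the moment bound for a single equation
\[
X_t = \xi + \int_0^t b(s, X_s, a_s)\,ds + \int_0^t \sigma(s, X_s)\,dW_s,
\]
with $\xi$ square-integrable and $\F_0$-measurable, $a\in\A$, and then apply the result componentwise.

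For existence and uniqueness I would follow the classical Picard iteration scheme (as in Karatzas--Shreve or Protter), which needs only slight adjustment to accommodate the random but adapted drift argument $a_s$. Set $X^{(0)}_t\equiv \xi$ and define $X^{(n+1)}$ by plugging $X^{(n)}$ into the right-hand side. Using (A1)--(A2) together with the Itô isometry and the $L^2$-integrability $\x{\int_0^T |a_s|^2 ds}<\infty$ guaranteed by admissibility, linear growth yields a uniform $L^2$-bound on the iterates, while the Lipschitz assumption in $x$ (the control enters the drift but is the same across iterations) gives
\[
\xN{\sup_{s\le t}|X^{(n+1)}_s - X^{(n)}_s|^2} \le C\int_0^t \xN{\sup_{u\le s}|X^{(n)}_u - X^{(n-1)}_u|^2}\,ds.
\]
Iterating this produces a Cauchy sequence in $L^2(\Omega;C([0,T];\rd))$; its limit $X$ solves the SDE, and uniqueness follows from the same Lipschitz-plus-Gr\"onwall argument applied to the difference of any two solutions. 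The integrability \eqref{eq:bound_on_b} is immediate from linear growth and the $L^2$-bound on $X$ together with the assumption on $a$.

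For the moment estimate I would apply It\^o's formula to $|X_s^{i,N}|^2$, take the supremum over $s\in[0,t]$ and then the conditional expectation $\mathbb{E}_N$. The Burkholder--Davis--Gundy inequality controls the stochastic-integral part by $C\,\xN{\int_0^t |\sigma(s,X^{i,N}_s)|^2 ds}$ (absorbing the resulting $\tfrac{1}{2}\xN{\sup_{s\le t}|X^{i,N}_s|^2}$ into the left-hand side), while linear growth bounds the drift contribution by $C\int_0^t(1+\xN{\sup_{u\le s}|X^{i,N}_u|^2}+\xN{|a^{i,N}_s|^2})\,ds$. Since $\xN{\int_0^T|a^{i,N}_s|^2 ds}<\infty$, Gr\"onwall's lemma then gives
\[
\xN{\sup_{s\in[0,t]}|X^{i,N}_s|^2} \le K_t\bigl(1+\xN{|\xi^{i,N}|^2}\bigr),
\]
as claimed.

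There is no real obstacle: the only subtlety is that the drift contains the random process $a^{i,N}$ rather than a deterministic function of $(t,x)$, so care must be taken to use the $L^2$-integrability of $a^{i,N}$ when controlling the $\int_0^t|b(s,X^{i,N}_s,a^{i,N}_s)|\,ds$ terms in both the Picard step and the Gr\"onwall estimate. Everything else is a verbatim application of standard SDE theory, which is presumably the reason the authors relegate the technicalities to the appendix.
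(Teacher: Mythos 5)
Your overall strategy is fine and matches what the paper does in spirit: the paper gives no proof at all, simply citing \cite[Chapter 1, Theorem 6.16]{yong1999stochastic}, and that cited theorem is proved by exactly the Picard-iteration-plus-Gr\"onwall machinery you reconstruct. The observation that the system decouples once $\bar a^N$ is fixed is correct and is the right reduction.

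There is, however, one genuine imprecision in your handling of the moment estimate. The proposition asserts $\xN{\sup_{s\in[0,t]}|X_s^{i,N}|^2} \leq K_t\left(1+\xN{|\xi^{i,N}|^2}\right)$ with $K_t$ depending \emph{only} on $t$, i.e.\ uniformly over admissible controls. Your Gr\"onwall argument, as written, bounds the drift contribution by $C\int_0^t\left(1+\xN{\sup_{u\le s}|X^{i,N}_u|^2}+\xN{|a^{i,N}_s|^2}\right)ds$ and then invokes only the finiteness of $\xN{\int_0^T|a^{i,N}_s|^2\,ds}$; this produces a constant of the form $K_t\left(1+\xN{|\xi^{i,N}|^2}+\xN{\int_0^t|a^{i,N}_s|^2\,ds}\right)$, which depends on the particular control and does not reduce to the stated bound. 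The missing ingredient is the standing assumption that $A$ is a \emph{compact} subset of $\mathbb{R}^d$, so that $|a^{i,N}_s|\le \sup_{a\in A}|a|<\infty$ pointwise and the $a$-term can be absorbed into the additive constant $1$ uniformly over all admissible controls. The paper flags exactly this point in its one-line proof (``Note that $K_t$ is independent of $a^{i,N}$ by compactness of $A$''). This uniformity is not cosmetic: it is used later (e.g.\ in the tightness and convergence arguments of Sections 3.3--3.4) that the second-moment bound does not degenerate along the sequence of controls $a^N$. With that one correction, your proof is complete.
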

\begin{proof}
A proof can be found in \cite[Chapter 1, Theorem 6.16]{yong1999stochastic}. 
Note that $K_t$ is independent of $a^{i,N}$ by compactness of $A$.
\end{proof}
\noindent
The process $X^{i,N}$ models the motion of an individual in a 
crowd of $N$ pedestrians, from now on called an $N$-crowd, who 
partially controls its velocity through the control $a^{i,N}$. Since its 
control is adapted to the full filtration $\f^N$, the model allows for the 
pedestrian to take every movement in the crowd into account. Its motion is also 
influenced by external forces, such as the random disturbance driven by 
$W^{i,N}$. The motion of the pedestrian may be modeled more generally than 
above by introducing an explicit weak interaction in the drift 
\cite{huang2006large}, 
such as
\begin{equation}
 dX_t^{i,N} = \frac{1}{N}\sum_{j=1}^N \widetilde{b}(t, X^{i,N}_t, a^{i,N}_t, 
X^{j,N})dt + \sigma(t, X^{i,N}_t)dW^{i,N}_t.
\end{equation}
It is also possible to let a common disturbance effect all pedestrians 
\cite{kolokoltsov2015mean}, to model for example evacuations during an 
earthquake, a fire, a tsunami etc.
\\~\\
Individual $i$ evaluates the state of the $N$-crowd, given by the 
control vector $\bar{a}^N = (a^{1,N},\dots, a^{N,N})$, according to its 
measure of risk
\begin{equation}
\label{eq:costfnal}
 J^{i,N}_r(\bar{a}^N) := \mathbb{E}_N\left[\int_0^T 
\left(\frac{1}{2}|a^{i,N}_t|^2 + 
\int_{\rd}\phi_r(X^{i,N}_t - y)\mu^{-i,N}_t(dy)\right) dt + 
\Psi(X_T^{i,N})\right],
\end{equation}
where $X^{1,N}, \dots, X^{N,N}$ solves \eqref{eq:sdesystem1} given 
$\bar{a}^N$ and $\mu^{-i,N}_t$ is the empirical measure of $X^{-i,N}$. The 
region where crowding has an influence on the pedestrian's 
choice of control, its 'personal space', is ideally modeled by a normalized 
indicator function,
\begin{equation}
\I_r(x) := 
\left\{
\begin{array}{l l}
\text{Vol}(B_r)^{-1}, & x \in B_r,
\\
0, & x \notin B_r, 
\end{array}
\right.
\end{equation}
where $B_r\subset \rd$ is the ball with radius $r > 0$ centered at the origin 
and Vol$(B_r)$ is its volume. The term 
\begin{equation}
 \int_{\rd}\I_r(X^{i,N}_t - y)\mu^{-i,N}_t(dy)
\end{equation}
then represents the number of pedestrians 
around $X^{i,N}_t$ within a distance less than $r$ at time 
$t$ 
\cite{djehiche2016mean}. To simplify the calculations we will use a smoothed 
version of $\I_r$. Let $\gamma_\delta$ be a mollifier,
$\gamma_\delta(x) := \gamma(x/\delta)/\delta$,
where $\gamma$ is a smooth symmetric probability density with compact support. 
For a fixed $\delta > 0$, we set
\begin{equation}
\label{eq:smooth_by_mollifier}
\phi_r(x) := \gamma_\delta \ast \I_r (x).
\end{equation}
For convergence estimates later in this section, we assume 
that the final cost $\Psi$ satisfies the following condition.\\
\begin{itemize}
\item[(A3)] For all $x_1, x_2\in\rd$ there exists a constant $K>0$ independent 
of $(x_1, x_2)$ such that
\begin{equation*}
\label{eq:Psi_lip}
 \left|\Psi(x_1) - \Psi(x_2)\right| \leq K|x_1 - x_2|.
\end{equation*}
\end{itemize}
The interpretation of the risk measure is the following. The 
first term penalizes energy usage whereas the second term 
penalizes paths through densely crowded areas. The final cost 
penalizes deviations from specific target regions. Typically the 
final cost takes large values everywhere except in areas where the pedestrians 
want to end up, places like meeting points, evacuation doors, etc.

\subsection{The mean-field type control problem}
\label{subseq:meanfieldgame}

Let $(\Omega, \F, \f, \p)$ be a complete filtered probability space such that 
the filtration is right continuous and augmented with $\p$-null sets.  
Let $\f$ carry a Wiener process $W$ and let $\xi$ be an 
$\F_0$-measurable and square-integrable $\rd$-valued random variable 
independent of $W$. Given a control $a \in \A$, the mean-field type dynamics is
\begin{equation}
\label{eq:sdeMF}
 dX_t = b(t,X_t, a_t)dt + \sigma(t,X_t)dW_t, \quad X_0 = \xi.
\end{equation}
By Proposition \ref{prop:exists_sol} there exists a unique strong solution to 
\eqref{eq:sdeMF}. The mean-field type risk measure is given by
\begin{equation}
 \label{eq:costMF}
 J_r(a) 
= 
\expv{\int_0^T \frac{1}{2}|a_t|^2 + \int_{\rd}\phi_r(X_t - y)\mu_{X_t}(dy) dt + 
\Psi(X_T)}.
\end{equation}
where $\mu_{X_t}$ is the distribution of $X_t$.
\begin{remark}
The difference between a mean-field type control and a mean-field 
game is that in general mean-field games can be reduced to a standard control 
problem and an equilibrium while a mean-field type control problem is a
nonstandard control problem 
\cite{andersson2011maximum, bensoussan2013mean}. The matching procedure 
to find the fixed point (equilibrium) for a mean-field game is pedagogically 
described as follows \cite{huang2006large,lasry2007mean}.\\
\begin{itemize}
 \item[(i)] Fix a deterministic function $\mu_t: [0,T]\rightarrow \pp_2(\rd)$.
 \item[(ii)] Solve the stochastic control problem 
 \begin{equation}
  \begin{aligned}
   &\hat{a} = \underset{a\in\A}{\text{argmin}}\ \x{\int_0^T\frac{1}{2}|a_t|^2 
+ \int_{\rd}\phi_r(X_t - 
y)\mu_t(dy)dt + \Psi(X_T)},
  \end{aligned}
 \end{equation}
 where $X$ is the dynamics corresponding to $a$.
 \item[(iii)] Determine the function $\hat{\mu}_t: [0,T]\rightarrow \pp_2(\rd)$ 
such that $\hat{\mu}_t = \mathcal{L}(\hat{X}_t)$ for all $t\in[0,T]$ where 
$\hat{X}$ is the dynamics corresponding to the optimal control $\hat{a}$.\\
\end{itemize}
In the mean-field type control setting, the measure-valued process 
$\left(\mu_{X_t}; t\in[0,T]\right)$ is not considered to be a separate 
variable but given by the input control process.
\end{remark}

\subsection{Convergence of the state process}
\label{sec:conv_of_proc}

Let the initial data $\xi^{1,N},\dots, 
\xi^{N,N}$ satisfy the following assumptions,\\
\begin{itemize}
\item[(B1)] $\sup_{N\in\mathbb{N}} 
\expvN{\frac{1}{N}\sum_{i=1}^N|\xi^{i,N}|^2} < \infty \text{ for all }i\in\bN$.
\item[(B2)] $(\xi^{1,N}, \dots, \xi^{N,N})$ is exchangeable for all $N\in\N$.
\item[(B3)] 
$\lim_{N\rightarrow\infty}\mathcal{L}\left(\frac{1}{N}\sum_{i=1}^N\delta_{\xi^{
i,N}}\right) = 
\delta_{\mu_0}$ in $\pp(\pp_2(\rd))$.\\
\end{itemize}
Under (B1)-(B3) the sequence $(\xi^{i,N})_{N\in\mathbb{N}}$ is tight
and a subsequence can be extracted that converges in distribution to a 
$\mu_0$-distributed random variable, from now on denoted by $\xi$.
We make the following assumption about the controls.\\
\begin{itemize}
 \item[(B4)] The controls are of feedback 
form, $a_t^{i,N}(\omega) = a^N(t, X^{i,N}_t(\omega))$, where each $a^N$ is an 
$A$-valued deterministic function and $a^N$ converge uniformly to $a$ as 
$N\rightarrow\infty$. Furthermore,
\begin{equation}
\label{eq:b4_equation}
 \sup_{N\in\mathbb{N}}
\expvN{\int_0^T |a^N(t,X^{i,N}_t) |^2} < \infty, \quad  \forall\ i\in\bN.
\end{equation}
\end{itemize}
\begin{remark}
Assumption (B4) implies that, while the paths of pedestrians in the 
$N$-crowd may differ, they are outcomes from a symmetric joint probability 
distribution. By exchangeability of $(\xi^{i,N}, W^{i,N})_{i=1}^N$, 
\begin{equation}
\label{eq:anonymous}
 (a^N(t, X^{i,N}_t))_{i=1}^N \overset{d}{=} (a^N(t, X^{\pi(i),N}_t))_{i=1}^N
\end{equation}
for all permutations $\pi$ of $\bN$, the interpretation is that we 
cannot distinguish between pedestrians in the crowd. The pedestrians are 
anonymous.
\end{remark}
\begin{proposition}
\label{prop:tight}
If $\mu^N$ is the empirical measure of $X^{1,N},\dots, X^{N,N}$, the solution 
of \eqref{eq:sdesystem1} given $a^N$, then $\{\mathcal{L}(\mu^N), 
N\in\mathbb{N}\}$ is tight in $\pp(\mathcal{M}_2)$.
\end{proposition}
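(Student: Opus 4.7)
The strategy is to reduce tightness of $\{\mathcal{L}(\mu^N)\}$ in $\pp(\mathcal{M}_2)$ to the tightness of a single particle's law via exchangeability, and then lift from the path-space empirical measure to $\mathcal{M}_2$ by continuity of the time-marginal map. By (B2), the feedback assumption (B4), and the symmetry of \eqref{eq:sdesystem1}, the random trajectories $(X^{1,N},\dots,X^{N,N})$ are exchangeable as elements of $C([0,T];\rd)^N$, so the empirical measure on path space,
\[
\nu^N := \frac{1}{N}\sum_{i=1}^N \delta_{X^{i,N}},
\]
is an exchangeable random element of $\pp_2(C([0,T];\rd))$. The map $\Phi$ that sends $\nu \in \pp_2(C([0,T];\rd))$ to the curve $(t\mapsto \nu\circ e_t^{-1}) \in \mathcal{M}_2$, with $e_t$ the evaluation at $t$, is $1$-Lipschitz for the sup-Wasserstein topology and satisfies $\mu^N=\Phi(\nu^N)$. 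Hence it suffices to prove tightness of $\{\mathcal{L}(\nu^N)\}$ in $\pp(\pp_2(C([0,T];\rd)))$.

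By a classical Sznitman-type criterion for exchangeable sequences, this in turn reduces to the tightness of $\{\mathcal{L}(X^{1,N})\}$ in $\pp_2(C([0,T];\rd))$, which I would prove in two pieces: (i) tightness in $\pp(C([0,T];\rd))$ via Kolmogorov's criterion applied to \eqref{eq:sdesystem1}, and (ii) uniform integrability of $\sup_t|X^{1,N}_t|^2$. For (i), the representation \eqref{eq:sdesystem1} combined with Burkholder-Davis-Gundy, the linear-growth bound in (A2), compactness of $A$, the $L^2$-control bound in (B4), and Gronwall's lemma would yield
\[
\expvN{|X^{1,N}_t - X^{1,N}_s|^4} \leq C|t-s|^2,
\]
with $C$ independent of $N$, while $\sup_N \expvN{|\xi^{1,N}|^2} < \infty$ (a consequence of (B1) together with exchangeability (B2)) pins down the starting point. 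Together with the estimate in Proposition \ref{prop:exists_sol}, one also obtains $\sup_N \expvN{\sup_t |X^{1,N}_t|^2} < \infty$.

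The main obstacle is the upgrade (ii) from weak tightness to $\pp_2$-tightness, since (B1) alone only gives uniform $L^2$-bounds and does not directly yield uniform integrability of the second moment. The standard workaround is to bootstrap the BDG/Gronwall estimate to a uniform $L^p$-bound for some $p>2$, which requires the initial data to admit a uniform $L^p$-moment; such a moment can be extracted from (B3) (after refining the probability space via Skorohod representation if necessary) and then the de la Vallée-Poussin criterion gives the desired uniform integrability of $\sup_t |X^{1,N}_t|^2$. Combining (i) and (ii) with the two reduction steps above concludes the proof.
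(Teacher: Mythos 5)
Your overall architecture --- exchangeability plus a Sznitman-type reduction to the single-particle law, followed by Kolmogorov-type moment estimates and a lift through the time-marginal map --- is the standard argument and is essentially what the cited literature carries out; the paper itself disposes of the proposition in two lines by invoking \cite{oelschlager1985law} for the uncontrolled case and observing that a feedback control valued in the compact set $A$ changes nothing, so your write-up is a genuinely more explicit route. However, two of your concrete steps do not go through as stated. First, the estimate $\expvN{|X^{1,N}_t-X^{1,N}_s|^4}\le C|t-s|^2$ requires, via Burkholder--Davis--Gundy and the linear growth in (A2), a bound on $\expvN{\sup_u|X^{1,N}_u|^4}$ and hence a uniform fourth moment of $\xi^{1,N}$, which is not among (B1)--(B4); you should instead condition on $\F_0^N$ (or truncate on $\{|\xi^{1,N}|\le R\}$, whose complement has uniformly small probability by (B1)--(B2) and Markov's inequality) before applying Kolmogorov's criterion, or use Aldous's criterion, which only needs second moments.

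Second, and more seriously, your resolution of what you correctly identify as the main obstacle --- upgrading tightness in $\pp(C([0,T];\rd))$ to tightness for the $\pp_2$-topology --- does not work: a uniform $L^p$ moment with $p>2$ cannot be ``extracted from (B3)''. Assumption (B3) is weak convergence of the laws of the initial empirical measures in $\pp(\pp_2(\rd))$; it controls second moments only, and one can take $\xi^{i,N}$ i.i.d.\ from laws converging in the square Wasserstein metric to $\mu_0$ while having no finite third moment, so that (B1)--(B3) hold but no uniform $p>2$ bound exists. The correct use of (B3) is different: since the initial empirical measures converge in the Polish space $\pp(\pp_2(\rd))$, their laws are automatically tight there, so for every $\ve>0$ they lie, with probability at least $1-\ve$ uniformly in $N$, in a compact subset of $\pp_2(\rd)$, i.e.\ one with uniformly integrable second moments. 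Propagating this through the conditional estimate $\expvN{\sup_t|X^{i,N}_t|^2\mid\F_0^N}\le K_T(1+|\xi^{i,N}|^2)$ furnished by Proposition \ref{prop:exists_sol} yields the uniform integrability of $\sup_t|X^{1,N}_t|^2$ on high-probability events, which is what the compactness criterion in $\mathcal{M}_2$ (Arzel\`a--Ascoli with values in compact subsets of $\pp_2(\rd)$) actually demands. With those repairs your argument is complete and, unlike the paper's citation, self-contained.
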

\begin{proof}
The empirical measures are elements of $\mathcal{M}_2$ by Proposition 
\ref{prop:exists_sol} together with (B1) and (B2). The proof of tightness in 
the case of uncontrolled diffusions is found in \cite{oelschlager1985law}. The 
introduction of a control does not change the situation.
\end{proof}
\noindent
Recall that a sequence $\{X_n\}$ of random variables converges weakly to $X$ in 
a Polish space if and only if $\{X_n\}$ is tight and every convergent 
subsequence of $\{X_n\}$ converges to $X$. The tightness of 
the empirical measures implies that along a converging subsequence, $\mu^N$ 
converges weakly to the measure-valued process $\mu$ that for all 
$f\in C^2_b(\mathbb{R}^d)$ satisfies
\begin{equation}
\label{eq:inteq}
\left<\mu_t,f\right> - \left<\mu_0,f\right> = \int_0^t \left<\mu_s,b(s, 
\cdot, a(s, 
\cdot))\cdot\nabla f +\frac{1}{2}\sigma(s,\cdot)\Delta f\right>ds.
\end{equation}
Since the strong solution of \eqref{eq:sdeMF} is unique, the weak solution is 
also unique \cite{yamada1971uniqueness} which is equivalent to uniqueness 
of solutions to \eqref{eq:inteq} \cite{karatzas2012brownian}. We have the 
following result.
\begin{theorem}
 Let $X^i$, $i\in\N$, be independent copies of the strong solution of 
\eqref{eq:sdeMF}. Under assumptions (A1)-(B4), 
$X^{i,N}$ converges weakly to $X^i$ as $N\rightarrow\infty$.
\end{theorem}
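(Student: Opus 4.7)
The plan is to establish a propagation-of-chaos result: I would first show that the empirical measures $\mu^N$ converge to the deterministic curve $t\mapsto \mathcal{L}(X_t)$ in $\mathcal{M}_2$, and then extract weak convergence of every tagged marginal $X^{i,N}\Rightarrow X^i$ via exchangeability and Sznitman's equivalence between convergence of empirical measures to a deterministic limit and $\mathcal{L}(X)$-chaoticity of the underlying system.

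For tightness, I would combine Proposition \ref{prop:tight} for the empirical measures with a Kolmogorov-type increment estimate $\xN{|X^{i,N}_t - X^{i,N}_s|^4}\le C|t-s|^2$ for the tagged process, obtained from the Burkholder--Davis--Gundy inequality together with the linear growth in (A2), the $L^2$-bound in Proposition \ref{prop:exists_sol}, and the control bound (B4); (B1) takes care of the initial condition. To identify any subsequential limit $\bar\mu$ of $\mu^N$, I would apply It\^o's formula to $f(X^{i,N}_t)$ for $f\in C^2_b(\rd)$ and average over $i\in\bN$, obtaining a semimartingale decomposition for $\langle \mu^N_t, f\rangle$ whose martingale part has quadratic variation of order $1/N$ by independence of $W^{1,N},\dots,W^{N,N}$. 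Passing to the limit using continuity of $b,\sigma$ in (A1) and the uniform convergence $a^N\to a$ in (B4) recovers \eqref{eq:inteq} for $\bar\mu$, and the uniqueness statement quoted just after \eqref{eq:inteq} then forces $\bar\mu$ to coincide with $t\mapsto \mathcal{L}(X_t)$. Since the limit does not depend on the subsequence, $\mu^N$ converges in probability to this deterministic limit, and chaoticity, combined with exchangeability of $(\xi^{i,N},W^{i,N})_{i\in\bN}$ and the symmetric feedback structure (B4), yields weak convergence of each $X^{i,N}$ to an independent copy $X^i$.

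The main obstacle will be the limit transition in the drift pairing $\langle \mu^N_s, b(s,\cdot, a^N(s,\cdot))\cdot\nabla f\rangle$, where two sources of non-closedness coexist: $\mu^N$ converges only weakly and the feedback $a^N$ is itself being perturbed. The combination of joint continuity of $b$ from (A1), compactness of $A$, the uniform $L^2$-bounds on states and controls, and the uniform convergence of $a^N$ to $a$ is exactly what is needed to interchange the limit with the pairing against $\bar\mu$; nevertheless, some care is required to handle the two perturbations simultaneously, to preserve uniform integrability along the subsequence, and to avoid invoking the yet-to-be-established identification of $\bar\mu$ as $\mathcal{L}(X)$ in the very argument that produces it.
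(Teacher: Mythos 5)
Your plan is correct and follows essentially the same route as the paper: the paper establishes tightness of $\mu^N$ (Proposition \ref{prop:tight}), identifies any subsequential limit through the integral equation \eqref{eq:inteq} and its uniqueness, and then concludes by Sznitman's propagation-of-chaos equivalence, which is exactly the architecture you describe. The only difference is one of detail: the paper compresses the tightness and limit-identification steps into citations of \cite{oelschlager1985law} and \cite{sznitman1991topics}, whereas you spell out the moment estimates and the passage to the limit in the drift pairing explicitly.
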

\begin{proof}
Applying Sznitman's propagation of chaos theorem
\cite{sznitman1991topics}, the result follows by the weak 
convergence of 
$\mu^N$ to the deterministic measure $\mu$.
\end{proof}

\subsection{Convergence of the risk measure}
\label{sec:costfnal1}

From the previous section we know that $X^{i,N}$, the strong solution of 
\eqref{eq:sdesystem1}, converges weakly to $X$, the strong solution of 
\eqref{eq:sdeMF}, and we know that $\mu^N_t$ converges weakly to 
$\mu_{X_t}$. Applying \eqref{eq:measureinequality}, we have 
that $d_{\pp_2(\rd)}(\mu^{-i,N}_t, \mu^N_t) \leq 2/N$,
so $\mu^{-i,N}_t$ converges weakly to $\mu_{X_t}$ as well. By
Skorokhod's Representation Theorem \cite[Theorem 
3.30]{kallenberg2006foundations} we can represent (up to 
distribution) all the random variables mentioned above in a common probability 
space $(\widetilde{\Omega}, \widetilde{\F}, \widetilde{\p})$ where they 
converge $\widetilde{\p}$-almost surely. This allows us to write 
\begin{equation}
 \begin{aligned}
  &|J^{i,N}_r(a^N) - J_r(a)|
  \leq
  \mathbb{E^{\widetilde{\p}}}\Bigg[\int_0^T 
\left|\frac{1}{2}|a^{N}(t, 
X^{i,N}_t)|^2 - 
\frac{1}{2}|a(t,X_t)|^2\right|
 \\
 &\hspace{2cm}+ \Bigg|\int_{\rd}\phi_r(X^{i,N}_t - y)\mu^{-i,N}_t(dy)
 - \int_{\rd}\phi_r(X_t -y)\mu_{X_t}(dy)\Bigg|dt 
 \\
 &\hspace{2cm} + 
\left|\Psi(X^{i,N}_T) - \Psi(X_T)\right|\Bigg],
 \end{aligned}
\end{equation}
By compactness of $A$, the Continuous Mapping Theorem, (B4) and 
Dominated Convergence we have
\begin{equation}
 \lim_{N\rightarrow\infty}\mathbb{E^{\widetilde{\p}}}\Bigg[\int_0^T 
\left|\frac{1}{2}|a^{N}(t, 
X^{i,N}_t)|^2 - 
\frac{1}{2}|a(t,X_t)|^2\right|\Bigg] = 0.
\end{equation}
By (A3), Proposition \ref{prop:exists_sol} and Dominated 
Convergence,
$
\mathbb{E^{\widetilde{\p}}}[|\Psi(X^{i,N}_T) - 
\Psi(X_T)|] = 0
$
as $N \rightarrow\infty$. Note that
\begin{equation}
\begin{aligned} 
&\mathbb{E^{\widetilde{\p}}}\left[\int_0^T\left|\int_{\rd}\phi_r(X^{
i ,N}_t - 
y)\mu_t^{-i,N}(dy) - 
\int_{\rd}\phi_r(X_t - y)\mu_{X_t}(dy)\right|dt\right]
 \\
 &\qquad \leq
\mathbb{E^{\widetilde{\p}}}\left[\int_0^T\left|\int_{\rd}\phi_r(X^{i
, N}_t - 
y)\mu_t^{-i,N}(dy) - 
\int_{\rd}\phi_r(X^{i,N}_t 
- y)\mu_{X_t}(dy)\right|dt\right]
 \\
 &\qquad +
\mathbb{E^{\widetilde{\p}}}\left[\int_0^T\left|\int_{\rd}\phi_r(X^{i
, N}_t - 
y)\mu_{X_t}(dy) - 
\int_{\rd}\phi_r(X_t 
- y)\mu_{X_t}(dy)\right|dt\right].
\end{aligned}
\end{equation}
As $N\rightarrow \infty$, the first term on the right hand side tends to zero 
by the definition of weak convergence while the second tends to zero by the 
Continuous Mapping Theorem and Dominated Convergence. We have proved the 
following result.
\begin{theorem}
\label{thm:cost_convergence}
Let $a\in\A$ and $a^N = (a, \dots, a)\in\A^N$, then
$J^{i,N}_r(a^N) = J_r(a) + \ve_N$
where $\lim_{N\rightarrow \infty}\ve_N = 0$.
\end{theorem}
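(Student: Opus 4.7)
The plan is to bound $|J^{i,N}_r(a^N) - J_r(a)|$ by the sum of three natural pieces (energy, crowding, terminal) and send each piece to zero. Since the preceding subsection already established weak convergence of $X^{i,N}$ to $X$ and of $\mu^N_t$ to $\mu_{X_t}$, and since $d_{\pp_2(\rd)}(\mu^{-i,N}_t,\mu^N_t)\leq 2/N$ gives weak convergence of $\mu^{-i,N}_t$ to $\mu_{X_t}$ as well, I would first invoke Skorokhod's Representation Theorem to realize $X^{i,N}$, $X$, $\mu^N$ and $\mu^{-i,N}$ on a common probability space $(\widetilde\Omega,\widetilde\F,\widetilde\p)$ where the convergences hold $\widetilde\p$-a.s. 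Working on this space removes the need to track joint laws and turns the convergence of functionals into pathwise statements plus an integrability argument.

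Next, I would split
\begin{equation*}
|J^{i,N}_r(a^N)-J_r(a)| \leq I^N_{\text{en}} + I^N_{\text{cr}} + I^N_{\text{ter}},
\end{equation*}
where $I^N_{\text{en}}$ collects the difference of the $\tfrac12|a|^2$-terms, $I^N_{\text{cr}}$ the difference of the crowding integrals, and $I^N_{\text{ter}}$ the difference of the terminal costs. The term $I^N_{\text{en}}$ vanishes because $a^N(t,\cdot)\to a(t,\cdot)$ uniformly by (B4), $X^{i,N}_t\to X_t$ almost surely, and the continuous mapping theorem together with dominated convergence (using the uniform $L^2$-bound in (B4) and compactness of $A$ as a majorant) handle the limit. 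The terminal term $I^N_{\text{ter}}$ is handled by the Lipschitz assumption (A3), the $L^2$-estimate of Proposition \ref{prop:exists_sol}, and dominated convergence.

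The main obstacle is $I^N_{\text{cr}}$, since the integrand depends both on the argument $X^{i,N}_t$ and on the measure $\mu^{-i,N}_t$. I would resolve this by the triangle-inequality split
\begin{equation*}
\left|\int_\rd \phi_r(X^{i,N}_t-y)\,\mu^{-i,N}_t(dy) - \int_\rd\phi_r(X_t-y)\,\mu_{X_t}(dy)\right|
\end{equation*}
\begin{equation*}
\leq \left|\int_\rd \phi_r(X^{i,N}_t-y)(\mu^{-i,N}_t-\mu_{X_t})(dy)\right| + \left|\int_\rd\bigl(\phi_r(X^{i,N}_t-y)-\phi_r(X_t-y)\bigr)\mu_{X_t}(dy)\right|.
\end{equation*}
Because $\phi_r=\gamma_\delta\ast\I_r$ is smooth, bounded and Lipschitz (hence in the test-function class defining the bounded $1$-Lipschitz metric up to a normalization), the first summand tends to zero along the Skorokhod subsequence by the a.s.\ weak convergence $\mu^{-i,N}_t\to\mu_{X_t}$, while the second tends to zero by Lipschitz continuity of $\phi_r$ and $X^{i,N}_t\to X_t$ a.s. Integrability in $(t,\omega)$ is immediate from boundedness of $\phi_r$, so dominated convergence upgrades the pointwise a.s.\ limits to the $L^1([0,T]\times\widetilde\Omega)$-convergence required. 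Finally, since the limit $J_r(a)$ is deterministic, subsequential convergence of the whole quantity is equivalent to convergence, which yields $\ve_N\to 0$ as claimed.
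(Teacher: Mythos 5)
Your proposal is correct and follows essentially the same route as the paper's own argument: Skorokhod representation on a common space, the three-term decomposition into energy, crowding, and terminal pieces, and the identical triangle-inequality split of the crowding term (first swapping $\mu^{-i,N}_t$ for $\mu_{X_t}$ via weak convergence, then swapping $X^{i,N}_t$ for $X_t$ via continuous mapping and dominated convergence). The only addition is your closing remark on subsequential versus full convergence, which is a harmless refinement of what the paper leaves implicit.
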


\subsection{Solutions to the $N$-crowd model and the MFT control 
problem}
\label{subsec:solutions-1}

The notion of solutions of the the $N$-crowd model 
(N-1) and the mean-field type control problem (MFT-1) for crowd 
aversion will now be defined.
\begin{definition}[Solution to N-1]
\label{def:solNP1}
Let $\hat{a}^N = (\hat{a}, \dots, \hat{a})\in \A^N$ for some fixed 
$\hat{a}\in\A$ and let $a^N = (a, \dots, a) \in \A^N$ for an arbitrary 
strategy $a\in\A$. Then $\hat{a}^N$ is a solution to N-1 if
\begin{equation}
 J^{i,N}_r(\hat{a}^N) \leq J^{i,N}_r(a^N), \quad \forall a\in \A,\ \forall 
i\in\bN.
\end{equation}
If, for a given $\ve>0$, $\hat{a}$ satisfies
\begin{equation}
  J^{i,N}_r(\hat{a}^N) \leq J^{i,N}_r(a^N) + \ve, \quad \forall a\in\A,\ 
\forall i\in\bN,
\end{equation}
then $\hat{a}^N$ is an $\ve$-solution to N-1.
\end{definition}
\begin{definition}[Solution to MFT-1]
 If $\hat{a}\in\A$ satisfies
 \begin{equation}
  J_r(\hat{a}) \leq J_r(a), \quad \forall a\in\A,
 \end{equation}
 then $\hat{a}$ is a solution to MFT-1.
\end{definition}
\noindent
The following result motivates the use of MFT-1 as an approximation to N-1. It 
confirms that we can construct an approximate solution to 
N-1 using a solution to MFT-1.
\begin{theorem}
\label{thm:construction_ve_sol_1}
If $\hat{a}$ solves MFT-1, then $\hat{a}^N = (\hat{a}, \dots, 
\hat{a})$ is a $\ve_N$-solution, where $\ve_N\rightarrow 0$ as 
$N\rightarrow \infty$, to N-1 among feedback strategies.
\end{theorem}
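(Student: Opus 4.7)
The plan is to combine Theorem \ref{thm:cost_convergence} with the MFT-1 optimality of $\hat a$. Fix an arbitrary feedback control $a \in \A$ and set $a^N = (a, \dots, a) \in \A^N$. By the anonymity statement \eqref{eq:anonymous} following from (B2) and (B4), the quantity $J^{i,N}_r(a^N)$ is independent of $i \in \bN$, so it is enough to establish the inequality for one fixed index.

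Applying Theorem \ref{thm:cost_convergence} twice, once with strategy $\hat a$ and once with the comparison $a$, produces scalar errors $\hat\ve_N \to 0$ and $\ve^a_N \to 0$ with
\begin{equation*}
J^{i,N}_r(\hat a^N) = J_r(\hat a) + \hat\ve_N, \qquad J^{i,N}_r(a^N) = J_r(a) + \ve^a_N.
\end{equation*}
Since $\hat a$ solves MFT-1, $J_r(\hat a) \leq J_r(a)$, and therefore
\begin{equation*}
J^{i,N}_r(\hat a^N) \leq J^{i,N}_r(a^N) + \hat\ve_N - \ve^a_N.
\end{equation*}
Setting $\ve_N := \hat\ve_N + \sup_{a \in \A}|\ve^a_N|$ then reproduces the bound required by Definition \ref{def:solNP1}, provided one can show $\ve_N \to 0$.

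The main obstacle is precisely this last point: the rate in Theorem \ref{thm:cost_convergence} must be made \emph{uniform} over feedback strategies $a\in\A$. Revisiting the estimates of Section \ref{sec:costfnal1}, one checks that every bound used there depends on $a$ only through structural constants. Compactness of $A$ and assumptions (A1)--(A3) render the Lipschitz and growth constants of $b,\sigma,\Psi$ uniform, while the mollifier construction \eqref{eq:smooth_by_mollifier} makes $\phi_r$ bounded and Lipschitz. Proposition \ref{prop:exists_sol} then yields second-moment bounds on $X^{i,N}$ that are uniform in $a$, and (B1) provides the corresponding uniform bound on the initial data. A synchronous coupling of the particle system with $N$ independent copies of the mean-field SDE \eqref{eq:sdeMF} driven by the same Wiener processes $W^{i,N}$, together with Gronwall's inequality, \eqref{eq:measureinequality} and \eqref{eq:Wasserstein_ineq}, gives an $a$-independent estimate of the form $\expv{\sup_{t\leq T}|X^{i,N}_t - X^i_t|^2} \leq C/N$. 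Feeding this bound into the three error terms that appear in the bound on $|J^{i,N}_r(a^N) - J_r(a)|$ in Section \ref{sec:costfnal1} produces a uniform rate and completes the argument.
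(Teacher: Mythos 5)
Your argument is correct and is essentially the paper's own proof, which simply invokes Theorem \ref{thm:cost_convergence} together with the MFT-1 optimality of $\hat a$ applied to both $\hat a$ and the competitor $a$. Your additional discussion of making the error $\ve^a_N$ uniform over competitors goes beyond what the paper records (its one-line proof is silent on this, even though Definition \ref{def:solNP1} does require a single $\ve$ valid for all $a\in\A$); just be aware that the synchronous-coupling and Gronwall step you sketch would additionally need a uniform Lipschitz or equicontinuity bound on the feedback maps $x\mapsto a(t,x)$ over the admissible class, which is not among the paper's stated assumptions (B4) only imposes uniform convergence and square-integrability.
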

\begin{proof}
The proof follows straight away by Theorem \ref{thm:cost_convergence}.
\end{proof}
\begin{remark}
It is known that the solution 
of a mean-field game corresponds to an approximate Nash equilibrium for N-1  
(\cite{huang2006large},\cite{lasry2007mean}). To the best of our knowledge, 
this 
has not been shown to be true for solutions to mean-field type control problems. 
Theorem \ref{thm:construction_ve_sol_1} has the following interpretation; a 
mean-field type optimal control induces an approximate solution for the 
$N$-crowd if the crowd consists homogeneous pedestrians and thus a 
representative pedestrian determines the control of all. This was in fact 
visible already in Theorem \ref{thm:cost_convergence}.
\end{remark}

\subsection{Deterministic version of MFT-1}
\label{subsec:det_version}

We want to present results in a setting similar to \cite{lachapelle2011mean} to 
highlight the differences between the models. To do this, we make the 
assumption that $\mu_{X_t}$ has a density $m_X(t,\cdot)$ for all $t\in[0,T]$. 
An example of sufficient conditions for the existence is bounded drift and 
diffusion \cite{oelschlager1985law}. Under this assumption, we may 
rewrite \eqref{eq:sdeMF}-\eqref{eq:costMF} into a deterministic problem for 
$m_X$. Furthermore, an admissible control can not be stochastic in 
the deterministic problem formulation. The full stochastic problem will be 
analyzed in future work. We have a new definition of an 
admissible control.
\begin{definition}[$\Ad$]
\label{def:ad}
 A square-integrable deterministic function $a: [0,T]\times\rd\rightarrow A$ 
will be called an admissible control for the deterministic problem and the set 
of such functions is denoted by $\Ad$.
\end{definition}
\noindent
By \eqref{eq:inteq} the density $m_X$ satisfies 
\begin{equation}
 \begin{aligned}
 &\int_{\rd}f(x)m_X(t,x)dx - \int_{\rd}f(x)m_X(0,x)dx  =
 \\
 &
\int_0^t\int_{\rd}\Bigg(b(s, x, a(s, x))\cdot\nabla f(x)
 + \frac{1}{2}\tr{\sigma\sigma^T(s,x)\nabla^2 f(x)}\Bigg)m_X(s, 
x)dsdx,
 \end{aligned}
\end{equation}
for all $f\in C_b^2(\rd)$ and for all $t\in[0,T]$, hence it is a weak solution 
to
\begin{equation}
\left\{
\begin{aligned}
\label{eq:mpde1}
 \frac{\partial m_X}{\partial t}(t,x) &=  
\frac{1}{2}\tr{\nabla^2\sigma \sigma^T m_X(t,x)} - 
\nabla\cdot(b(t,x,a(t,x))m_X(t,x)),
 \\
 m_X(0,x) &= \text{ density of } \mu_0.
\end{aligned}
\right.
\end{equation}
We arrive to a deterministic version of MFT-1 (dMFT-1),
\begin{equation}
\label{eq:detNsingle}
\left\{
\begin{aligned}
&\underset{a\in\A_d}{\text{min}}& &J_r^{\text{det}}(a)
\\
&\text{s.t.}& & \frac{\partial m_X}{\partial t}(t,x) =  
\frac{1}{2}\tr{\nabla^2\sigma \sigma^T m_X(t,x)} - 
\nabla\cdot(b(t,x,a(t,x))m_X(t,x)),
 \\
 & & & m_X(0,x) = \text{ density of } \mu_0.
\end{aligned}
\right. 
\end{equation}
where 
\begin{equation}
\begin{aligned}
\label{eq:cost1}
J_r^{\text{det}}(a) &:= \int_{\rd}\int_0^T 
\Bigg(\frac{1}{2}|a(t,x)|^2m_X(t,x)\, + 
\\
&
\left(\int_{\rd}\phi_r(x-y)m_X(t,y)dy\right) m_X(t,x)\Bigg)dtdx +
\int_{\rd}\Psi(x)m(T,x)\Bigg]dx.
\end{aligned}
\end{equation}
\begin{remark}
Note that $\phi_r$ converges weakly to $\delta_0$ as $r\rightarrow 0$. 
In this limit, the risk measure tends to
\begin{equation}
 J_0^{\text{det}}(a) = \int_{\rd}\int_0^T \frac{1}{2}|a(t,x)|^2m_X(t,x) + 
m_X(t,x)^2dt + \Psi(x)m(T,x)dx,
\end{equation}
which is exactly the risk analyzed in the pedestrian crowd model of 
\cite{lachapelle2011mean}! Clearly this case corresponds to a 
situation where the pedestrian will only react to how likely it is to `bump' 
into other pedestrians. In the case of 
positive $r$, a pedestrian is effected by crowding within a personal space of 
nonzero range and reacts to the level of the density within this range. This 
is the distinction between local and nonlocal crowd averse behavior.
\end{remark}

\section{Multi-crowd model for crowd aversion}
\label{sec:multiplecrowd}

\subsection{The particle picture}
In this section, crowd averse behavior between several crowds is 
introduced. The crowds are allowed to differ in their opinions on target 
areas and/or the level of crowd aversion. This inhomogeneity is introduced 
in 
the risk measure. Let the setup be as in the previous chapter, except now 
$\f^N$ carries $NM$ independent $\f^N$-Wiener processes $W^{i,j,N}$, $i\in 
\bN$, $j\in \bM$ and 
there is for all $i\in\bN$, $j\in \bM$ a square-integrable $\F_0^N$ measurable 
$\rd$-valued random variable $\xi^{i,j,N}$ independent of all the Wiener 
processes. Given $NM$ admissible controls $a^{i,j,N}$, consider the system
\begin{equation}
\label{eq:sdeM} \left\{
\begin{aligned}
&dX^{i,j,N}_t = b(t, X^{i,j,N}_t, a^{i,j,N}_t)dt + \sigma(t, 
X^{i,j,N}_t)dW^{i,j,N}_t,
\\
&X_0^{i,j,N} = \xi^{i,j,N},\qquad i \in\bN,\ j\in\bM.
\end{aligned}
\right.
\end{equation}
In view of Proposition \ref{prop:exists_sol} there exists a unique strong 
solution to \eqref{eq:sdeM}. Pedestrian $i$ in crowd $j$ evaluates 
$\mathbf{a}$ according to its individual risk measure
\begin{equation}
\label{eq:costM}
J^{i,j,N}_{r,\Lambda}(\mathbf{a}) := \expvN{\int_0^T 
\frac{1}{2}|a^{i,j,N}|^2 + \int_{\rd}\phi_r(X^{i,j,N}_t - 
y)\widetilde{\nu}^{j,N}_{t,\Lambda}(dy) dt + 
\Psi_j(X_T^{i,j,N})},
\end{equation}
where 
\begin{equation}
\widetilde{\nu}^{j, N}_{t, \Lambda} := 
\sum_{k=1}^M\lambda_{jk}\frac{1}{N}\sum_{l=1}^N\delta_{X^{l,k, N}_t},
\end{equation}
$\lambda_{jk}$ are bounded and non-negative real numbers and $\Lambda = 
(\lambda_{jk})_{jk}$. The weights $\lambda_{jk}$ quantify the 
crowd aversion preferences in the model. If 
$\lambda_{jk}$ is high, pedestrians in crowd $j$ pay a high price for being 
close to pedestrians in crowd $k$. If $\lambda_{jk}$ is zero, pedestrians 
in crowd $j$ are indifferent to the positioning of pedestrians in crowd $k$. 
Note that if $\lambda_{jk} = 1$ for $j=k$ 
and $0$ otherwise, the crowds are disconnected in the sense that there is no 
interaction between pedestrians from different crowds.

\subsection{The mean-field type model}

Again the setup be as before except that $\f$ now carries $M$ 
independent $\f$-Wiener processes $W^j$, $j\in \bM$, and 
there are $M$ square-integrable $\F_0$ measurable $\rd$-valued random variables 
$\xi^j$, $j\in \bM$, independent of all the Wiener 
processes. Given a vector of admissible controls $\bar{a}^M = (a^1,\dots, a^M$ 
the mean-field type dynamics are
\begin{equation}
\label{eq:sdesystemMF}
 dX_t^j = b(t,X^j_t,a^j_t)dt + \sigma(t, X^j_t)dW^j_t, \quad X_0^j = \xi^j, 
\quad j\in\bM.
\end{equation}
There exists a unique strong solution to \eqref{eq:sdesystemMF} by 
Proposition \ref{prop:exists_sol}. The mean-field type 
risk measure for crowd $j\in\bM$ is given by
\begin{equation}
\label{eq:costMF-M}
 J^{j}_{r,\Lambda}(\bar{a}^M) := \mathbb{E}\left[\int_0^T\frac{1}{2}|a^j|^2 + 
\int_{\rd}\phi_r(X^j_t - y)\nu^j_{t,\Lambda}(dy)dt + \Psi_j(X_T^j)\right],
\end{equation}
where $\nu^j_{t,\Lambda} := \sum_{k=1}^M\lambda_{jk}\mu_{X^k_t}$.

\subsection{Solutions of N-M and MFT-M}

The convergence results for the single-crowd case generalizes to multiple 
crowds under the following assumptions.\\
\begin{itemize}
 \item[(C1)] $\sup_{N\in\N}\xN{\frac{1}{N}\sum_{i=1}^N|\xi^{i,j,N}|^2} < 
\infty$ for all $j\in\bM$.
 \item[(C2)] $(\xi^{1,j,N},\dots, \xi^{N,j,N})$ is exchangeable for all 
$j\in\bM$.
 \item[(C3)] $\lim_{N\rightarrow\infty}\mathcal{L}\left(\frac{1}{N}\sum_{i=1}^N 
\xi^{i,j,N}\right) = 
\delta_{\mu_0^j}$ in $\pp(\pp_2(\rd))$ for all $j\in\bM$.
\item[(C4)]  The controls are of feedback 
form, $a_t^{i,j,N}(\omega) = 
a^{j,N}(t, X^{i,j,N}_t(\omega))$ where each $a^{j,N}$ is a deterministic 
$A$-valued function and $a^{j,N}$ converge uniformly to $a^j$ as 
$N\rightarrow\infty$. Furthermore,
\begin{equation}
 \sup_{N\in\mathbb{N}} 
\expvN{\int_0^T |a^{j,N}(t,X^{i,j,N}_t) |^2} < \infty, \quad  \forall\ 
i\in\bN,\ \forall\ j\in\bM.
\end{equation}
\end{itemize}
Under (A1),(A2), (A3) for all final costs $\Psi_j$ and (C1)-(C4) 
the results from 
Section \ref{sec:conv_of_proc} and Section \ref{sec:costfnal1} immediately 
generalize to multiple crowds. Next, solutions to the $N$-crowd model (N-M) 
and the mean-field type model (MFT-M) for the multi-crowd case are 
defined.

\begin{definition}[Solution to N-M]
For any $a^j\in\A$, let $(a^{j})^N 
= (a^j, \dots, a^j)\in\A^N$. The control vector $((\hat{a}^{1})^N,\dots, 
(\hat{a}^{M})^N)$ is a solution to 
N-M if
\begin{equation}
 J^{i,j,N}_{r,\Lambda}((\hat{a}^{1})^N,\dots, (\hat{a}^{M})^N) \leq 
J^{i,j,N}_{r,\Lambda}((a^{j})^N,(\hat{a}^{-j})^N), 
\quad \forall\ a^j\in\A, \ \forall\ j\in\bM.	
\end{equation}
If
\begin{equation}
 J^{i,j,N}_{r,\Lambda}((\hat{a}^{1})^N,\dots, (\hat{a}^{M})^N) \leq 
J^{i,j,N}_{r,\Lambda}((a^{j})^N,(\hat{a}^{-j})^N) + \ve, 
\quad \forall\ a^j\in\A, \ \forall\ j\in\bM
\end{equation}
for $\ve > 0$, $((\hat{a}^{1})^N,\dots, (\hat{a}^{M})^N)$ is an 
$\ve$-solution to MFT-M.
\end{definition}
\begin{definition}[Solution to MFT-M]
 The vector $\hat{a}^M = (\hat{a}^{1,M}, \dots, 
\hat{a}^{M,M})\in\A^M$ is a solution to MFT-M if
 \begin{equation}
  J^j_{r,\Lambda}(\hat{a}^M) \leq J^j_{r,\Lambda}(a, \hat{a}^{-j,M}), \quad 
\forall\ a\in\A,\ \forall\ j\in\bM.
 \end{equation}
\end{definition}
\begin{remark}
There is a fundamental difference between the definition of solutions in the  
single-crowd case and in the multi-crowd case. The latter is a 
Nash 
equilibrium while the former is an optimal control. So, what has changed? We 
still have anonymity between pedestrians within a crowd but the vector of all 
controls used in the multi-crowd case, 
$((a^{j,N}(t,X^{i,j,N}_t))_{i=1}^N)_{j=1}^M$ for N-M and 
$(a^{j}(t,X^{j}_t))_{j=1}^M$ for MFT-M, is not exchangeable (cf. 
\eqref{eq:anonymous}). From our point of view, we may distinguish 
between two pedestrians from different crowds and hence the pedestrians are 
not anonymous anymore. Thus, it makes sense to look at a game problem 
between the crowds.
\end{remark}
\noindent
The approximation result Theorem \ref{thm:construction_ve_sol_1} 
generalizes to the multi-crowd case.
\begin{theorem}
 Assume that $\hat{a}^M$ is a solution to MFT-M. Then the vector \\
$((\hat{a}^{1,M})^N,\dots, (\hat{a}^{M,M})^N)$ is an 
$\ve_N$-solution to N-M.
\end{theorem}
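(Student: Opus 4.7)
The plan is to mirror the proof of Theorem \ref{thm:construction_ve_sol_1}, which reduced everything to the cost-convergence Theorem \ref{thm:cost_convergence}. The essential task is therefore to produce the multi-crowd analogue of Theorem \ref{thm:cost_convergence}: namely, for each $j\in\bM$ and each tuple of feedback strategies $(a^1,\dots,a^M)\in\A^M$,
\begin{equation*}
J^{i,j,N}_{r,\Lambda}\bigl((a^1)^N,\dots,(a^M)^N\bigr) \;=\; J^{j}_{r,\Lambda}(a^1,\dots,a^M) + \ve_N^j,
\end{equation*}
with $\ve_N^j\to 0$. Once that is established, the theorem follows by applying the convergence twice — once at $(\hat a^{1,M},\dots,\hat a^{M,M})$ and once at $(\hat a^{-j,M}, a)$ — and combining with the MFT-M optimality inequality $J^{j}_{r,\Lambda}(\hat a^M) \leq J^{j}_{r,\Lambda}(a,\hat a^{-j,M})$ to obtain $J^{i,j,N}_{r,\Lambda}((\hat a^{1,M})^N,\dots) \leq J^{i,j,N}_{r,\Lambda}((a)^N,(\hat a^{-j,M})^N) + \ve_N$ uniformly in $a\in\A$, $i\in\bN$, $j\in\bM$.

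The first step is to run Section \ref{sec:conv_of_proc} independently on each crowd $j\in\bM$. Under (C1)-(C4) the Wiener processes $W^{i,j,N}$ are independent across both $i$ and $j$, the initial data within each crowd is exchangeable, and each crowd uses a common feedback control $a^{j,N}\to a^j$. Therefore Sznitman's propagation-of-chaos argument used in Proposition \ref{prop:tight} and the theorem following it applies crowd-by-crowd: the empirical measure $\mu^{j,N}$ of $(X^{1,j,N},\dots,X^{N,j,N})$ converges weakly in $\pp(\mathcal{M}_2)$ to the deterministic flow $\mu_{X^j}$ solving the limit PDE with input $a^j$, independently for each $j$. By \eqref{eq:measureinequality} the ``leave-one-out'' empirical measures share the same limit. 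Since $\widetilde{\nu}^{j,N}_{t,\Lambda}$ and $\nu^j_{t,\Lambda}$ are finite non-negative linear combinations with the same weights $\lambda_{jk}$, weak convergence of each component gives weak convergence $\widetilde{\nu}^{j,N}_{t,\Lambda}\rightharpoonup \nu^j_{t,\Lambda}$.

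The second step is to repeat the bound in Section \ref{sec:costfnal1} on the coupled probability space provided by Skorokhod's representation theorem. The three-term decomposition used there carries over verbatim: the energy term converges by compactness of $A$, (C4), and dominated convergence; the terminal term converges by (A3); and the crowd-aversion term splits into (i) a difference $\int \phi_r(X^{i,j,N}_t - y)(\widetilde{\nu}^{j,N}_{t,\Lambda}-\nu^j_{t,\Lambda})(dy)$, controlled by the weak convergence of the weighted empirical measures applied to the bounded Lipschitz test function $y\mapsto \phi_r(x-y)$, and (ii) a difference $\int(\phi_r(X^{i,j,N}_t-y)-\phi_r(X^j_t-y))\nu^j_{t,\Lambda}(dy)$, controlled by a.s.\ convergence of $X^{i,j,N}$ together with continuity of $\phi_r$ and dominated convergence.

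The main (minor) obstacle is verifying that the leave-one-out correction and the inter-crowd cross terms behave well: one must check that when we replace $a^j$ by an arbitrary deviation $a$ while keeping the other crowds at $\hat a^{k,M}$ for $k\neq j$, the empirical measures of the other crowds still converge to the correct $\mu_{X^k}$. This is immediate because those crowds' dynamics and controls do not depend on crowd $j$, so their limit is unaffected by the deviation. Once this independence is noted, the $\ve_N$-approximation follows along the exact lines of Theorem \ref{thm:construction_ve_sol_1}, with the only change being that the bound must be taken uniformly over $j\in\bM$, which is harmless since $M$ is fixed and finite.
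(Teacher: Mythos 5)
Your proposal is correct and follows essentially the same route as the paper, which simply asserts that the convergence results of Sections \ref{sec:conv_of_proc} and \ref{sec:costfnal1} generalize crowd-by-crowd under (C1)--(C4) and then repeats the argument of Theorem \ref{thm:construction_ve_sol_1}; your write-up merely makes explicit the multi-crowd cost-convergence step and the two applications of it (at the equilibrium and at the deviated profile) that the paper leaves implicit.
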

\begin{proof}
 The proof follows exactly the same steps as the proof of Theorem 
\ref{thm:construction_ve_sol_1}.
\end{proof}
\noindent
Finally, under the assumption that $\mu_{X^j_t}$ admits a density 
$m_{X^j}(t,\cdot)$, we rewrite MFT-M into a deterministic problem (dMFT-M).
\begin{definition}[Solution to dMFT-M]
A control vector $\hat{a} = (\hat{a}^1,\dots,\hat{a}^M) \in\A^M_d$ solves 
dMFT-M if
\begin{equation}
 J^{j,\textup{det}}_{r,\Lambda}(\hat{a}) \leq J^{j,\textup{det}}_{r,\Lambda}(a, 
\hat{a}^{-j}),\quad \forall\ a\in\A_d, \ \forall\ j\in\bM,
\end{equation}
where
\begin{equation}
 \begin{aligned}
 J^{j,\textup{det}}_{r,\Lambda}(\hat{a}) 
 &:= 
\int_{\rd}\Bigg[\int_0^T\Bigg(\frac{1}{2}|\hat{a}^j(t,x)|^2m_{j}(t,x) 
 \\
 &+ 
\sum_{k=1}^M\lambda_{jk}\int_{\rd}\phi_r(x - y)m_k(t,y)dy\,
m_j(t,x)\Bigg)dt + \Psi_j(x)m_j(T,x)\Bigg]dx
 \end{aligned}
\end{equation}
and $m_j$ solves
\begin{equation}
 \begin{aligned}
 \left\{
 \begin{array}{l}
  \displaystyle\frac{\partial m_j}{\partial t} (t,x) = 
\frac{1}{2}\tr{\nabla^2 (\sigma\sigma^T 
m_j)(t,x)} - \nabla\cdot(b(t,x,\hat{a}^j(t,x))m_j(t,x)), 
\\
m_j(0,t) = \text{ the density of } \mu_0^j.
 \end{array}
 \right.
 \end{aligned}
\end{equation}
\end{definition}
\begin{remark}
\label{rem:r_0_M}
In the limit $r\rightarrow 0$ the risk measure is
\begin{equation}
\label{eq:cong_risk_M_r_0}
\begin{aligned}
J_{0, \Lambda}^{j,\text{det}}(a) 
&= 
\int_{\rd}\Bigg[\int_0^T\Bigg( 
\frac{1}{2}|a^j(t,x)|^2m_j(t,x) 
\\
&+ 
\sum_{k=1}^M\lambda_{j,k}m_k(t,x)m_j(t,x)\Bigg)dt + 
\Psi_j(x)m_j(T,x)\Bigg]dx. 
\end{aligned}
\end{equation}
The interpretation is the same as in the single-crowd model, when 
$r\rightarrow 0$ the personal space of the pedestrians shrink to a singleton 
and only collisions have an impact on the choice of control. 
Note that \eqref{eq:cong_risk_M_r_0} with parameters $M = 2$, $\lambda_{11} 
= \lambda_{22} =  1$ and $\lambda_{12} = \lambda_{21} = \lambda$ is exactly the 
cost that appears in \cite{lachapelle2011mean}.
\end{remark}

\subsection{An optimal control problem equivalent to dMFT-M}
In this section an optimal control problem is introduced. It is shown to have 
the same solution as dMFT-M, so instead of solving the game problem an optimal 
control is characterized by a Pontryagin-type Maximum 
Principle. To ease notation, let $\varphi = (\varphi_1,\dots,\varphi_M)$ for 
$\varphi \in \{\Psi(x), m(t,x), |a(t,x)|^2\}$. Consider the following 
optimization problem,
\begin{equation}
\tag{OC}
\label{eq:Pn}
\begin{aligned}
&\underset{a\in\A_d^M}{\text{min}}& &J_{r,\bar{\Lambda}}(a)
\\
&\text{s.t.}& & 
\displaystyle\frac{\partial m_j}{\partial t}(t,x) 
=
\frac{1}{2}\tr{\nabla^2(\sigma\sigma^T m_j)(t,x)} - 
\nabla\cdot(b(t,x,a^j(t,x))m_j(t,x)),
\\
&& &m_j(0,x) = \text{density of } \mu_0^j,\quad j\in\bM,
\end{aligned}
\end{equation}
where
\begin{align}
&J_{r,\bar{\Lambda}}(a) 
:=
\int_{\rd}\Bigg[\int_0^T \Bigg(\frac{1}{2}|a(t,x)|^2\cdot m(t,x) + 
G_{\phi_r}[m]^T(t,x)\bar{\Lambda} m(t,x) \Bigg) dt
\\
\nonumber
&\hspace{3.5cm} + \Psi(x)\cdot m(T,x)\Bigg]dx,\quad 
\bar{\Lambda}\in\mathbb{R}^{M\times M},
\\
&G_{\phi_r}[m](t,x) := \left(\int_{\rd}\phi_r(x-y)m_1(t,y)dy, \dots, 
\int_{\rd}\phi_r(x-y)m_M(t,y)dy\right),
\end{align}
The following proposition is the first link between dMFT-M and 
\eqref{eq:Pn}.

\begin{proposition}
\label{prop:prop2}
If $\hat{a}$ solves \eqref{eq:Pn} and $\Lambda = \bar{\Lambda} + 
\bar{\Lambda}^T - \textup{diag}(\bar{\Lambda})$, then $\hat{a}$ is solves 
dMFT-M.
\end{proposition}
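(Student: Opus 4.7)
The plan is to prove, for each fixed $j\in\bM$ and each $a\in\A_d$, the key equality
\begin{equation*}
J_{r,\bar{\Lambda}}(a,\hat{a}^{-j}) - J_{r,\bar{\Lambda}}(\hat{a}) \;=\; J^{j,\textup{det}}_{r,\Lambda}(a,\hat{a}^{-j}) - J^{j,\textup{det}}_{r,\Lambda}(\hat{a}),
\end{equation*}
from which the theorem follows at once: since $\hat{a}$ minimizes $J_{r,\bar{\Lambda}}$, the left-hand side is nonnegative for every admissible deviation in slot $j$, and hence so is the right-hand side, which is precisely the Nash condition defining a solution to dMFT-M. Let $\tilde{m}_j$ denote the density produced by the $j$-th Fokker--Planck equation when its control is switched from $\hat{a}^j$ to $a$, the other densities $\hat{m}_k$, $k\neq j$, being unchanged. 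Since only one component of the density vector is perturbed, both the kinetic term $\tfrac{1}{2}|a|^2\cdot m$ and the terminal term $\Psi\cdot m(T)$ in $J_{r,\bar{\Lambda}}$ collapse in the difference to their single $j$-th summand, which is exactly what appears in $J^{j,\textup{det}}_{r,\Lambda}$. The non-trivial work is therefore confined to the crowd-aversion coupling.

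To handle that coupling, I would expand the bilinear form
\begin{equation*}
G_{\phi_r}[m]^T \bar{\Lambda}\, m(t,x) \;=\; \sum_{k,\ell=1}^{M} \bar{\lambda}_{k\ell}\left(\int_{\rd} \phi_r(x-y)\,m_k(t,y)\,dy\right) m_\ell(t,x),
\end{equation*}
and split the $(k,\ell)$ sum into the four cases according to whether each index equals $j$ or not. The case $k\neq j$, $\ell\neq j$ contributes zero to the difference. The case $k=\ell=j$ contributes $\bar{\lambda}_{jj}\bigl[(\int\phi_r\tilde{m}_j)\tilde{m}_j - (\int\phi_r\hat{m}_j)\hat{m}_j\bigr]$. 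In the cross case $k=j$, $\ell\neq j$ the difference takes the form $\int(\int\phi_r(x-y)(\tilde{m}_j-\hat{m}_j)(t,y)\,dy)\hat{m}_\ell(t,x)\,dx$, which I would symmetrize via $\phi_r(x-y)=\phi_r(y-x)$---valid because both $\gamma_\delta$ and $\I_r$ are even---and Fubini to recast it as $\int(\int\phi_r\hat{m}_\ell)(\tilde{m}_j-\hat{m}_j)\,dx$, i.e.\ the same shape as the remaining case $k\neq j$, $\ell=j$. Combining the two off-diagonal contributions produces the coefficient $\bar{\lambda}_{jk}+\bar{\lambda}_{kj}$ in front of $\int(\int\phi_r\hat{m}_k)(\tilde{m}_j-\hat{m}_j)\,dx$ for each $k\neq j$.

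On the other side, the interaction difference in $J^{j,\textup{det}}_{r,\Lambda}$ uses $\tilde{m}_k=\hat{m}_k$ for $k\neq j$ and reduces to $\lambda_{jj}\bigl[(\int\phi_r\tilde{m}_j)\tilde{m}_j-(\int\phi_r\hat{m}_j)\hat{m}_j\bigr]$ plus $\sum_{k\neq j}\lambda_{jk}\int(\int\phi_r\hat{m}_k)(\tilde{m}_j-\hat{m}_j)\,dx$. Matching the two expressions term by term yields exactly $\lambda_{jj}=\bar{\lambda}_{jj}$ and $\lambda_{jk}=\bar{\lambda}_{jk}+\bar{\lambda}_{kj}$ for $k\neq j$, which is precisely the hypothesis $\Lambda=\bar{\Lambda}+\bar{\Lambda}^T-\textup{diag}(\bar{\Lambda})$. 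The main obstacle will be the careful bookkeeping of the four index cases and the rigorous Fubini--symmetrization step; no new regularity or estimates beyond those already required to define $J_{r,\bar{\Lambda}}$ are needed.
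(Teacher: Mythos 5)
Your proposal is correct and follows essentially the same route as the paper's proof: both reduce the claim to showing that $J_{r,\bar{\Lambda}}$ and $J^{j,\textup{det}}_{r,\Lambda}$ differ only by terms independent of $(a^j,m_j)$, with the interaction coupling handled by splitting the double sum over indices and using the symmetry of $\phi_r$ together with Fubini to move the convolution onto the unperturbed densities, which is exactly where the identity $\lambda_{jk}=\bar{\lambda}_{jk}+\bar{\lambda}_{kj}$ (and $\lambda_{jj}=\bar{\lambda}_{jj}$) is consumed. The only cosmetic difference is that you phrase this as an equality of cost differences under a single-slot deviation, while the paper states that the residual $J_{r,\bar{\Lambda}}-J^{j,\textup{det}}_{r,\Lambda}$ is independent of $(a^j,m_j)$; these are equivalent.
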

\begin{proof}
 The proof is found in Appendix \ref{sec:proof2}.
\end{proof}
\noindent
The condition $\Lambda = \bar{\Lambda} + \bar{\Lambda}^T - 
\text{diag}(\bar{\Lambda})$ forces $\Lambda$ to be symmetric and the 
interpretation is that the aversion between crowds must be symmetric, i.e. if a 
crowd is averse to another, the other one must be equally averse towards the 
first. One can of course consider other situations, but then it is not possible 
to rewrite the game into an optimization problem on the form of \eqref{eq:Pn}. 
Therefore from now $\Lambda$ is assumed to satisfy the condition 
of Proposition \ref{prop:prop2}. Note that $\bar{\Lambda}$ does not necessarily 
have to be symmetric. Towards a characterization of the optimal control, let
\begin{equation}
\begin{aligned}
 f(t,x,a,m) 
&:= \frac{1}{2}|a(t,x)|^2\cdot m(t,x) + 
G_{\phi_r}[m](t,x)^T\bar{\Lambda} 
m(t,x),
\\
g(x,m) 
&:=
\Psi(x)\cdot m(T,x),
\end{aligned}
\end{equation}
and let, with some abuse of notation,
\begin{equation}
\begin{aligned}
&\tr{\sigma\sigma^T \nabla^2 p (t,x)} 
:= 
\left(\tr{\sigma\sigma^T \nabla^2 p_1(t,x)}, \dots, \tr{\sigma\sigma^T 
\nabla^2 p_M(t,x)}\right),
\\
&\tr{\nabla^2(\sigma\sigma^T m)(t,x)} 
:= 
\left(\tr{\nabla^2(\sigma\sigma^T m_1)(t,x)}, \dots, 
\tr{\nabla^2(\sigma\sigma^T 
m_M)(t,x)}\right). 
\end{aligned}
\end{equation}

\begin{theorem}[Sufficient maximum principle for \eqref{eq:Pn}]
\label{thm:maxp2}
Let $\hat{a} \in \A_d^M$, let
\begin{equation}
\label{eq:hamiltonian_NL}
 H(t,x,a,m,p) 
 := f(t,x,a,m) + \sum_{j=1}^M b(t,x,a^j(t,x))m_j(t,x)\cdot\nabla p_j(t,x),
\end{equation}
and let $p$ solve the adjoint equation 
\begin{equation}
\left\{
 \begin{aligned}
  \label{eq:adjoint2}
 \frac{\partial p}{\partial t}(t,x)
 &= -\Bigg(\frac{1}{2}|\hat{a}(t,x)|^2 + 
G_{\phi_r}[\hat{m}]^T(t,x)(\bar{\Lambda} + \bar{\Lambda}^T)
 \\
 & + (b(t,x,\hat{a}^1(t,x))\cdot\nabla p_1(t,x),\dots, 
b(t,x,\hat{a}^M(t,x))\cdot\nabla p_M(t,x)) 
 \\
 &+ 
 \frac{1}{2}\tr{\sigma\sigma^T\nabla^2p(t,x)}\Bigg),
\\
 p(T,x) &= \Psi(x).
 \end{aligned}
 \right.
\end{equation}
Assume that 
\begin{equation}
\label{eq:convexity_of_H}
 (a,m) \mapsto \int_{\rd} H(t,x,a,m,p)dx
\end{equation}
is convex for all $t\in[0,T]$ Then $\hat{a}$ solves \eqref{eq:Pn} if
for all $w^j\in\A_d$ and $j\in\bM$
\begin{equation}
\label{eq:optimality_condition_H}
\int_{\rd}\int_0^T D_{a^j}H(t,x,\hat{a}(t,x),\hat{m}(t,x), p)\cdot w^j(t,x)dtdx 
= 0.
\end{equation}
\end{theorem}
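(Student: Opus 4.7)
The plan is the standard sufficient-maximum-principle argument for PDE-constrained control: I compare $J_{r,\bar{\Lambda}}(a)-J_{r,\bar{\Lambda}}(\hat{a})$ for an arbitrary $a\in\A_d^M$, use the adjoint equation \eqref{eq:adjoint2} to re-express the terminal-cost contribution, and then invoke the convexity assumption \eqref{eq:convexity_of_H} together with \eqref{eq:optimality_condition_H} to force the resulting bound to be nonnegative.

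First, I would write
\begin{equation*}
J_{r,\bar{\Lambda}}(a)-J_{r,\bar{\Lambda}}(\hat{a}) = \int_{\rd}\int_0^T\bigl[f(t,x,a,m)-f(t,x,\hat{a},\hat{m})\bigr]dt\,dx + \int_{\rd}\Psi(x)\cdot\bigl(m(T,x)-\hat{m}(T,x)\bigr)dx,
\end{equation*}
where $m$ and $\hat{m}$ denote the densities produced by $a$ and $\hat{a}$ through the Fokker-Planck constraint with the common initial datum. Using $p(T,\cdot)=\Psi$ and $m(0,\cdot)=\hat{m}(0,\cdot)$, the terminal term becomes $\int_0^T\!\!\int_{\rd}\partial_t[p\cdot(m-\hat m)]\,dx\,dt$; expanding by the product rule and substituting the Fokker-Planck equations for $m,\hat m$ and the adjoint equation for $p$, then performing two integrations by parts in $x$ to shift $\nabla^2$ from $m-\hat m$ onto $p$ and one to shift $\nabla\cdot$ from $b\,m$ onto $p$, turns $\int p\cdot\partial_t(m-\hat m)dx$ into
\begin{equation*}
\int_{\rd}\tfrac12\tr{\sigma\sigma^T\nabla^2p}\cdot(m-\hat m)dx + \int_{\rd}\bigl[H(a,m,p)-H(\hat a,\hat m,p) - f(a,m)+f(\hat a,\hat m)\bigr]dx,
\end{equation*}
while substituting \eqref{eq:adjoint2} into $\int\partial_tp\cdot(m-\hat m)dx$ produces the opposite-sign $\tfrac12\tr{\sigma\sigma^T\nabla^2p}\cdot(m-\hat m)$ term plus $-\,D_m\!\int_{\rd}H\,dx\bigl|_{(\hat a,\hat m)}\cdot(m-\hat m)$.

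After the diffusion contributions cancel, the cost gap collapses to
\begin{equation*}
J_{r,\bar{\Lambda}}(a)-J_{r,\bar{\Lambda}}(\hat{a}) = \int_0^T\!\!\int_{\rd}\Bigl[H(t,x,a,m,p)-H(t,x,\hat a,\hat m,p) - D_m\!\!\int_{\rd}\!H\,dx\Big|_{(\hat a,\hat m)}\!\!\cdot(m-\hat m)\Bigr]dx\,dt.
\end{equation*}
Convexity of $(a,m)\mapsto\int_{\rd}H(t,x,a,m,p)dx$ gives a first-order lower bound in which the $D_m H$ contribution cancels exactly, leaving $\sum_{j=1}^M\int_0^T\!\!\int_{\rd}D_{a^j}H(t,x,\hat a,\hat m,p)\cdot(a^j-\hat a^j)\,dx\,dt$, which vanishes by \eqref{eq:optimality_condition_H} applied with $w^j=a^j-\hat a^j$. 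Hence $J_{r,\bar{\Lambda}}(a)\ge J_{r,\bar{\Lambda}}(\hat a)$ for every $a\in\A_d^M$.

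The main technical obstacle is computing the functional derivative $D_m\!\int_{\rd}H\,dx$ of the nonlocal crowd-aversion integrand $G_{\phi_r}[m]^T\bar{\Lambda}m$ and verifying that the $-G_{\phi_r}[\hat m]^T(\bar{\Lambda}+\bar{\Lambda}^T)$ term in \eqref{eq:adjoint2} is precisely this derivative. This relies on the symmetry $\phi_r(-x)=\phi_r(x)$ inherited from \eqref{eq:smooth_by_mollifier}: transferring the convolution in $G_{\phi_r}[\eta]^T\bar{\Lambda}m$ gives $\eta^T\bar{\Lambda}G_{\phi_r}[m]$, and adding the symmetric variation $G_{\phi_r}[m]^T\bar{\Lambda}\eta=\eta^T\bar{\Lambda}^TG_{\phi_r}[m]$ yields the factor $\bar{\Lambda}+\bar{\Lambda}^T$. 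A secondary concern is killing the boundary contributions in the spatial integrations by parts, which I expect to be controlled by the standing linear-growth and Lipschitz assumptions (A1)--(A2) together with the moment bound in Proposition \ref{prop:exists_sol} and the decay properties of $m,\hat m,p$ these imply.
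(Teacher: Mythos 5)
Your argument is correct, and it rests on exactly the same pillars as the paper's proof: the Hamiltonian/adjoint duality, the convexity of $(a,m)\mapsto\int_{\rd}H\,dx$, the symmetry of $\phi_r$ to convert $\int G_{\phi_r}[\eta]^T\bar{\Lambda}\hat m\,dx$ into $\int G_{\phi_r}[\hat m]^T\bar{\Lambda}^T\eta\,dx$ (so that the adjoint source carries the factor $\bar{\Lambda}+\bar{\Lambda}^T$), and the optimality condition \eqref{eq:optimality_condition_H} to kill the $D_{a^j}H$ terms. The execution differs in one respect worth noting: the paper perturbs along the convex combination $a_\epsilon=\epsilon a+(1-\epsilon)\hat a$, works with the exact state difference $\eta=m^\epsilon-\hat m$ (which drags along a remainder $\kappa^j_\epsilon$ that is asserted to cancel), and concludes $J_{r,\bar{\Lambda}}(a_\epsilon)\ge J_{r,\bar{\Lambda}}(\hat a)$ for convex perturbations; you instead compare $\hat a$ directly with an arbitrary $a\in\A_d^M$ and obtain the exact identity
\begin{equation*}
J_{r,\bar{\Lambda}}(a)-J_{r,\bar{\Lambda}}(\hat a)=\int_0^T\!\!\int_{\rd}\Bigl[H(a,m,p)-H(\hat a,\hat m,p)\Bigr]dx\,dt-\int_0^T D_m\!\!\int_{\rd}\!H\,dx\Big|_{(\hat a,\hat m)}[m-\hat m]\,dt
\end{equation*}
via the pairing $\int\Psi\cdot(m(T)-\hat m(T))dx=\int_0^T\frac{d}{dt}\int p\cdot(m-\hat m)\,dx\,dt$. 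Your route is cleaner: it avoids the $\epsilon$-bookkeeping and the remainder term entirely, and it yields the stated conclusion for all admissible $a$ rather than only along convex perturbations (the paper's last line implicitly requires letting $\epsilon\to1$ or a continuity argument it does not spell out). Two small points to tidy up: justify that $w^j=a^j-\hat a^j$ is an admissible test direction in \eqref{eq:optimality_condition_H} (the condition is linear in $w^j$, so it extends from $\A_d$ to differences of elements of $\A_d$), and note, as you anticipate, that the vanishing of boundary terms in the spatial integrations by parts is taken for granted at the same formal level as in the paper.
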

\noindent

\begin{proof}
Let $a,\hat{a}\in\A^M_d$ 
and let
$a_\epsilon := \epsilon a + (1-\epsilon)\hat{a}$, $\epsilon\in (0,1)$. Let  
$m^\epsilon$ and $\hat{m}$ satisfy the constraints of \eqref{eq:Pn} with 
$a_\epsilon$ and $\hat{a}$ respectively, then $\eta 
:= m^\epsilon - \hat{m}$ solves
\begin{equation}
\left\{
\begin{aligned}
 \frac{\partial\eta_j}{\partial t}(t,x) &= \frac{1}{2}\tr{\sigma^T\sigma(t,x) 
\nabla^2\eta_j(t,x)} 
  \\&- \nabla\cdot(b(t,x,\hat{a}^j(t,x))\eta_j(t,x) + 
\kappa^j_\epsilon(t,x)),
 \\
\eta_j(0,x) &= 0, \quad j\in\bM,
\end{aligned}
\right.
\end{equation}
where $\kappa^j_\epsilon := 
D_a b(t,x,\hat{a}^j(t,x))\epsilon a^j m^\epsilon_j + o(\epsilon a^j)$
is a remainder that will cancel out in the end. Let $\varphi^\epsilon(t,x,p) := 
\varphi(t, x, a_\epsilon, m^\epsilon, p)$ for $\varphi \in \{f, g, H\}$ and 
define 
$\hat{\varphi}$ in the same way using $\hat{a}$. Note that
\begin{equation}
\begin{aligned}
   f^\epsilon(t,x) - \hat{f}(t,x) 
   &= 
   H^\epsilon(t,x,p) - \hat{H}(t,x,p) 
   \\
   &- \sum_{j=1}^M 
\left(b(t,x,\hat{a}^j(t,x))\eta_j(t,x) + 
\kappa^j_\epsilon(t,x)\right)\cdot\nabla p_j(t,x)
\end{aligned}
\end{equation}
and by symmetry of $\phi_n$,
\begin{equation}
\label{eq:Gsymmetry}
 \int_{\rd} G_{\phi_r}[\hat{m}](t,x)\bar{\Lambda}\eta(t,x)dx = 
\int_{\rd} G_{\phi_r}[\eta](t,x)\bar{\Lambda}^T\hat{m}(t,x)dx.
\end{equation}
By the convexity assumption on $H$,
\begin{equation}
 \begin{aligned}
&J_{r,\bar{\Lambda}}(a_\epsilon) - J_{r,\bar{\Lambda}}(\hat{a})
\\
&=
\int_{\rd}\int_0^Tf^\epsilon(t,x) - \hat{f}(t,x)dtdx + 
\int_{\rd}g^\epsilon(x) - \hat{g}(x)dx
\\
&\geq
\int_{\rd}\int_0^T \Bigg\{ D_m\hat{H}[\eta](t,x,p) + 
\sum_{j=1}^M D_{a^j}\hat{H}(t,x,p)\cdot (a_\epsilon^j(t,x) - 
\hat{a}^j(t,x)) 
\\
&\qquad 
-\sum_{j=1}^M\left( 
b(t,x,\hat{a}^j(t,x))\eta_j(t,x)+\kappa^j_\epsilon(t,x)\right)\cdot\nabla 
p_j(t,x)\Bigg\} dtdx
\\
&\qquad 
+ \int_{\rd} \Psi(x)\cdot\eta(T,x) dx
\\
\end{aligned}
\end{equation}
By a variation argument, the $m$-derivative of $\hat{H}$ is found to be
\begin{equation}
 \begin{aligned}
&D_m\hat{H}[\eta](t,x,p) 
\\ 
&= \frac{1}{2}|\hat{a}(t,x)|^2\cdot\eta(t,x) + 
 G_{\phi_r}[\hat{m}]^T(t,x)\bar{\Lambda}\eta(t,x) 
 \\
 &\quad +\, G_{\phi_r}[\eta]^T(t,x)\bar{\Lambda}\hat{m}(t,x) + 
 \sum_{j=1}^M b(t,x,\hat{a}^j(t,x))\eta_j(t,x)\cdot\nabla p_j(t,x).
\end{aligned}
\end{equation}
The $a$-derivatives of $\hat{H}$ vanish by the optimality condition  
\eqref{eq:optimality_condition_H}. Hence, using \eqref{eq:Gsymmetry},
\begin{equation}
\begin{aligned}
&J_{r,\bar{\Lambda}}(a_\epsilon) - J_{r,\bar{\Lambda}}(\hat{a})
\\
&\geq
\int_0^T\int_{\rd} \Bigg\{\frac{1}{2}|\hat{a}(t,x)|^2 + 
G_{\phi_r}[\hat{m}]^T(t,x)(\bar{\Lambda} + \bar{\Lambda}^T) + 
\frac{1}{2}\tr{\sigma\sigma^T\nabla^2p(t,x)}
\\
&\quad + (\hat{a}^1(t,x)\cdot\nabla p_1(t,x), \dots, 
\hat{a}^M(t,x)\cdot\nabla p_M(t,x)) + \frac{\partial 
p}{\partial t}(t,x)\Bigg\}\cdot\eta(t,x)dxdt  
 \end{aligned}
\end{equation}
\noindent
Applying the adjoint equation \eqref{eq:adjoint2} now 
gives $J_{r,\bar{\Lambda}}(a_\epsilon) - 
J_{r,\bar{\Lambda}}(\hat{a}) \geq 0$ for all convex perturbations $a_\epsilon$ 
of 
$\hat{a}$. In the case of a control sets $A$ which is not convex the proof can 
be carried out in similar fashion by replacing the convex perturbation 
$a_\epsilon$ by a spike variation.
\end{proof}
\noindent
Note that if 
\begin{equation}
\label{eq:we_have_an_optimum}
\hat{a}^j(t,x) = - ( D_{a} b(t,x,a(t,x))|_{a = \hat{a}^j})\nabla p_j(t,x)
\end{equation}
the optimality condition \eqref{eq:optimality_condition_H} is satisfied. In the 
case of linear dynamics, \eqref{eq:we_have_an_optimum} is the well-known 
solution $\hat{a}^j(t,x) = -\nabla p_j(t,x)$.
No property of $\bar{\Lambda}$ except boundedness in norm was used in the proof 
of the maximum principle. The following proposition identifies all matrices 
$\bar{\Lambda}$ such that the convexity assumption 
\eqref{eq:convexity_of_H} holds.

\begin{proposition}
\label{prop:convexity2}
Condition \eqref{eq:convexity_of_H} holds if and only if
\begin{equation}
\label{eq:convexcondition2}
\int_{\rd}\int_{\rd}\phi_r(x-y)(m(t,y)-m'(t,y))^T\bar{\Lambda}(m(t,x)-m'(t,
x))dydx 
\geq 0,
\end{equation}
for all densities $m$ and $m'$ and $t\in [0,T]$ 
\end{proposition}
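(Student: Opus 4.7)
The plan is to isolate the only part of $\int_{\rd} H(t,x,a,m,p)\,dx$ that depends on $\bar\Lambda$ and is not affine in $m$ for fixed $a$, and then reduce joint convexity to a pure $m$-convexity statement on that quadratic piece. Concretely, I would write
\begin{equation*}
\int_{\rd} H(t,x,a,m,p)\,dx = L[a,m] + Q[m],
\end{equation*}
where
\begin{equation*}
L[a,m] := \int_{\rd}\Bigl(\tfrac{1}{2}|a|^2\cdot m + \sum_{j=1}^M b(t,x,a^j)\,m_j\cdot\nabla p_j\Bigr)\,dx
\end{equation*}
is affine in $m$ for each fixed $a$ and carries no $\bar\Lambda$-dependence, while
\begin{equation*}
Q[m] := \int_{\rd}\int_{\rd}\phi_r(x-y)\,m(t,y)^T\bar\Lambda\, m(t,x)\,dy\,dx
\end{equation*}
is a quadratic form in $m$ that absorbs the entire $\bar\Lambda$-dependence of $H$.

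For the direction $(\Rightarrow)$, joint convexity \eqref{eq:convexity_of_H} implies in particular convexity in $m$ for each fixed $a$. Since $L[a,\cdot]$ is affine, its second variation vanishes, so $Q$ must itself be convex in $m$ along admissible perturbations. Expanding $Q[m+\epsilon\tilde m] - Q[m] - \epsilon\,dQ_m[\tilde m] = \epsilon^2\,Q[\tilde m]$, convexity of $Q$ is equivalent to $Q[\tilde m]\ge 0$ for every admissible $\tilde m$; taking $\tilde m = m - m'$ for two densities $m, m'$ recovers \eqref{eq:convexcondition2}. For the converse $(\Leftarrow)$, since differences of densities span the tangent cone to the density manifold, \eqref{eq:convexcondition2} delivers $Q[\tilde m]\ge 0$ along all admissible perturbations, hence $Q$ is convex in $m$. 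Combined with the $\bar\Lambda$-independent piece $L$, which is jointly convex under the model's structural hypotheses on $b$ and the quadratic kinetic cost, the sum $L+Q$ is jointly convex, recovering \eqref{eq:convexity_of_H}.

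The main subtle point will be the correct handling of the asymmetry of $\bar\Lambda$: since $\phi_r$ is symmetric, swapping the dummy variables $x$ and $y$ in $Q[m]$ shows that only the symmetric part of $\bar\Lambda$ enters, namely
\begin{equation*}
Q[m] = \tfrac{1}{2}\int_{\rd}\int_{\rd}\phi_r(x-y)\,m(t,y)^T(\bar\Lambda + \bar\Lambda^T)\,m(t,x)\,dy\,dx,
\end{equation*}
so \eqref{eq:convexcondition2} is really a positivity condition on $(\bar\Lambda + \bar\Lambda^T)/2$ weighted by the nonlocal kernel $\phi_r$. Once this symmetry is unwound, the equivalence becomes a direct application of the elementary fact that a quadratic form is convex if and only if its associated bilinear form is positive semidefinite on the ambient space, taken here to be the tangent cone to the density manifold.
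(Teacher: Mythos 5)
Your argument is essentially the paper's: both of you isolate the $\bar\Lambda$-dependent quadratic term $Q[m]=\int_{\rd}\int_{\rd}\phi_r(x-y)\,m(t,y)^T\bar\Lambda\,m(t,x)\,dy\,dx$, observe that every other term is affine in $m$ (for fixed $a$) and therefore drops out of the convexity test, and reduce \eqref{eq:convexity_of_H} to $Q[m-m']\ge 0$ --- the paper via the explicit combination $\alpha m+(1-\alpha)m'$, you via the second variation, which is the same computation. The one looseness you share with the paper is the joint-convexity claim: $\int_{\rd}\tfrac12|a|^2\cdot m\,dx$ is convex in $a$ and affine in $m$ but not jointly convex in $(a,m)$, so your assertion that $L$ is jointly convex, like the paper's treatment of convexity in $a$ and in $m$ separately, does not actually establish convexity of the joint map in \eqref{eq:convexity_of_H} as literally stated.
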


\begin{proof}
The convexity of $H$ in $a$ is trivial. $H$ is convex in $m$ if
\begin{equation}
\begin{aligned}
\int_{\rd}H(t,x,a,\alpha m+(1-\alpha)m',p)dtdx \leq &\ \alpha\int_{\rd} 
H(t,x,a,m,p)dtdx 
\\
&+ (1-\alpha)\int_{\rd} H(t,x,a,m',p)dtdx.
\end{aligned}
\end{equation}
The inequality above can be rearranged into
\begin{equation}
\label{eq:psdineq2}
\begin{aligned}
0 
&\geq
(\alpha^2-\alpha)\int_{\rd}G_{\phi_r}[\widetilde{m}](t,x)\bar{\Lambda}\widetilde
{m}(t,x)dx
\\
&
=
(\alpha^2-\alpha)\int_{\rd}\int_{\rd}\phi_r(x-y)\widetilde{m}(t,x)^T\bar{\Lambda
}\widetilde{m}(t,x)dydx,
\end{aligned}
\end{equation}
where $\widetilde{m} := m - m'$. The fact that $(\alpha^2 - \alpha) < 0$ 
concludes the proof.
\end{proof}
\noindent
The opposite direction of Proposition  \ref{prop:prop2} can now be proven.
\begin{proposition}
\label{prop:NgivesP2}
If $\hat{a}$ solves dMFT-M, $\hat{m}$ 
satisfies the constraints of \eqref{eq:Pn} with control $\hat{a}$ and $p$ 
satisfies the adjoint equation \eqref{eq:adjoint2}, then $\hat{a}$ solves 
\eqref{eq:Pn}.
\end{proposition}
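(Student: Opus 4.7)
The plan is to exploit that dMFT-M is a Nash equilibrium condition which, crowd by crowd, reduces to a single-player optimal control problem, and to show that the necessary Pontryagin conditions for each of these individual problems coincide with the joint optimality condition \eqref{eq:optimality_condition_H} for \eqref{eq:Pn}. Theorem \ref{thm:maxp2} will then deliver the result, under the convexity hypothesis \eqref{eq:convexity_of_H} which is carried along via Proposition \ref{prop:convexity2}.

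First I would fix $j\in\bM$ and use dMFT-M to note that $\hat{a}^j$ minimises $a\mapsto J^{j,\textup{det}}_{r,\Lambda}(a,\hat{a}^{-j})$ over $\A_d$, with $\hat m_k$, $k\neq j$, frozen at their equilibrium values. This is a standard optimal control problem, so the Pontryagin necessary principle produces an adjoint $p^j$ satisfying
\begin{equation*}
\frac{\partial p^j}{\partial t} = -\Bigl(\tfrac12|\hat a^j|^2 + \mathcal{D}_j + b(t,x,\hat a^j)\cdot\nabla p^j + \tfrac12\tr{\sigma\sigma^T\nabla^2 p^j}\Bigr), \quad p^j(T,\cdot)=\Psi_j,
\end{equation*}
where $\mathcal{D}_j$ is the functional derivative in $m_j$ of $\sum_k \lambda_{jk}(\phi_r*m_k)\,m_j$ evaluated at $\hat m$. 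Using the symmetry $\phi_r(x-y)=\phi_r(y-x)$ exactly as in \eqref{eq:Gsymmetry}, the two contributions of the self-interaction term $\lambda_{jj}(\phi_r*m_j)m_j$ combine to give $2\lambda_{jj}(\phi_r*\hat m_j)$, while each cross term contributes $\lambda_{jk}(\phi_r*\hat m_k)$, so
\begin{equation*}
\mathcal{D}_j(t,x) = \lambda_{jj}(\phi_r*\hat m_j)(t,x) + \sum_{k=1}^M\lambda_{jk}(\phi_r*\hat m_k)(t,x).
\end{equation*}
The corresponding first-order condition from the $j$-th individual problem reads
\begin{equation*}
\int_{\rd}\int_0^T \bigl(\hat a^j\hat m_j + D_ab(t,x,\hat a^j)\hat m_j\cdot\nabla p^j\bigr)\cdot w^j\,dt\,dx = 0,\quad \forall w^j\in\A_d.
\end{equation*}

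Next I would identify $p^j$ with $p_j$. The $j$-th component of $G_{\phi_r}[\hat m]^T(\bar\Lambda+\bar\Lambda^T)$ equals $\sum_k(\bar\Lambda_{kj}+\bar\Lambda_{jk})(\phi_r*\hat m_k)$; the hypothesis $\Lambda = \bar\Lambda + \bar\Lambda^T - \textup{diag}(\bar\Lambda)$ rewrites this as $\lambda_{jj}(\phi_r*\hat m_j)+\sum_k\lambda_{jk}(\phi_r*\hat m_k)$, matching $\mathcal{D}_j$ term by term. Since \eqref{eq:adjoint2} decouples into $M$ scalar linear backward parabolic equations with terminal data $\Psi_j$, uniqueness forces $p^j=p_j$. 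Substituting into the first-order condition above gives precisely \eqref{eq:optimality_condition_H} for the joint Hamiltonian $H$ of \eqref{eq:hamiltonian_NL}, one component at a time, so Theorem \ref{thm:maxp2} concludes that $\hat a$ solves \eqref{eq:Pn}. The main technical obstacle is the bookkeeping that produces $\mathcal{D}_j$ correctly: the factor-of-two coming from the symmetric differentiation of the self-interaction $\lambda_{jj}(\phi_r*m_j)m_j$ in $m_j$ must line up exactly with the doubled diagonal $2\bar\Lambda_{jj}$ in $\bar\Lambda+\bar\Lambda^T$, and the whole argument hinges on this cancellation combined with the symmetry of $\phi_r$.
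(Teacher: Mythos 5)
Your proposal is correct and follows essentially the same route as the paper: differentiate the $j$-th crowd's cost at $\hat a^j$ (the paper does this explicitly via the linearised density $h^0_j$ and a duality pairing with $p_j$, whereas you invoke the Pontryagin necessary principle as a black box), verify that the resulting first-order condition is exactly \eqref{eq:optimality_condition_H} — including the key matching of the factor $2\lambda_{jj}$ from the self-interaction with the doubled diagonal of $\bar\Lambda+\bar\Lambda^T$ — and conclude via Theorem \ref{thm:maxp2}. The adjoint-identification step you add is implicit in the paper, which simply takes $p$ satisfying \eqref{eq:adjoint2} as a hypothesis and shows the pairing yields \eqref{eq:last_equation} directly.
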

\begin{proof}
The proof is found in Appendix \ref{app:NgivesP2}.
\end{proof}
The local risk measure, introduced in Remark \ref{rem:r_0_M}, 
will naturally yield a different Hamiltonian and adjoint equation 
than the ones above. Anyhow, results analogous to Proposition 
\ref{prop:prop2}, Theorem \ref{thm:maxp2} and Proposition \ref{prop:NgivesP2} 
hold for the local case, and their proofs are carried out following the same 
steps as in the nonlocal case. The most notable structural change is 
that in the local case, $H$ is convex if and only if 
$\bar{\Lambda}$ is positive semidefinite.

\section{Numerical example}
\label{sec:numexp}
With the following numerical example we want to illustrate the 
difference local  nonlocal crowd aversion. We consider the following simple 
pedestrian model on the one-dimensional torus $\T$,
\begin{equation}
\label{eq:numeq}
\left\{
 \begin{aligned}
  &\underset{a \in \Ad}{\text{min}}& &\int_{\T}\int_0^T 
\left\{\frac{a^2(t, 
x)}{2} + C\int_{\T}\phi_r(x - y)m(t,y)dy\right\}m(t,x) dtdx
\\
&& &\hspace{5 cm} + 
\int_{\mathbb{T}}\Psi(x)m(T,x)dx
 \\
 &\text{s.t.}& &\frac{\partial m}{\partial t}(t,x) = 
\frac{1}{2}\frac{\partial^2 m}{\partial x^2}(t,x) - \frac{\partial}{\partial 
x}(a(t,x)m(t,x)),
 \\
 & & &m(0,x) = m_0(x).
 \end{aligned}
 \right.
\end{equation}
To make the comparison we also consider the corresponding local crowd aversion 
problem
\begin{equation}
 \label{eq:numeq2}
 \left\{
  \begin{aligned}
  &\underset{a \in \Ad}{\text{minimize}}& &\int_{\T}\int_0^T 
\left\{\frac{a^2(t, 
x)}{2} + Cm(t,x)\right\}m(t,x) dt + 
\Psi(x)m(T,x)dx
 \\
 &\text{subject to}& &\frac{\partial m}{\partial t}(t,x) = 
\frac{1}{2}\frac{\partial^2 m}{\partial x^2}(t,x) - \frac{\partial}{\partial 
x}(a(t,x)m(t,x)),
 \\
 & & &m(0,x) = m_0(x).
 \end{aligned}
 \right.
\end{equation}
The constraint in \eqref{eq:numeq} and \eqref{eq:numeq2} 
corresponds to the 
dynamics of a pedestrian that controls its velocity but is disturbed by white 
noise,
\begin{equation}
 dX_t = a(t,X_t)dt + dW_t.
\end{equation}
The constant $C$ has been introduced to reweight the contribution of 
crowd aversion. By up-weighting this term, emphasis is given to 
the impact of the preference, local or nonlocal, and the difference between 
the two crowds will be more clear. To solve \eqref{eq:numeq} and 
\eqref{eq:numeq2} the gradient 
decent method (GDM) of \cite{lachapelle2011mean} is used.

\subsection{Simulations and discussions}

We let $T=1$, $C=500$ and $m_0$ and $\phi_r$ are set 
to the functions presented in Figure \ref{fig:init}. Most pedestrians are 
initially 
gathered around $x = 0$ and they have an incentive to end up around $x = 0.5$ 
at time $t=1$.
The personal space of a pedestrian is modeled as
\begin{equation}
 \hat{\phi}_{0.2}(x) := 5\I_{[0,\,.2]}(x)
\end{equation}
In the calculations, $\hat{\phi}_{0.2}$ is smoothed with a mollifier 
(cf. \eqref{eq:smooth_by_mollifier}). Note that
\begin{equation}
\int_{\T}\hat{\phi}_{0.2}(x-y)m(t,y)dy = 5\p\left(x - X_t \in [0, 0.2]\right),
\end{equation}
The use of an indicator to model the personal space thus has the 
following interpretation; the pedestrian acting under nonlocal 
crowd aversion is affected by the probability of other 
pedestrians being closer than $0.2$ from 
its own position. The averaging effect of a nonlocal crowd aversion model is 
clear: the larger the personal space, the bigger neighborhood around the 
pedestrian is affecting it.
\\~\\
The optimal controls for \eqref{eq:numeq} and 
\eqref{eq:numeq2} are found by the GDM-scheme of \cite{lachapelle2011mean}. The 
convergence of the risk is presented in Figure \ref{fig:cr_fig}.
\noindent
In Figure \ref{fig:density_fig}, a comparison between the 
solutions of \eqref{eq:numeq} and \eqref{eq:numeq2} is displayed. The crowds 
behave similarly 
until time begins to approach $t=1$. The crowd acting under nonlocal 
crowd aversion then gathers more densely in the low cost area. 
Since the crowding experienced by a pedestrian in the nonlocal 
model is an average over a larger neighborhood, it cares less about 
pointwise high densities and the benefits of reaching the low cost area 
around $x=0.5$ has a stronger impact in the nonlocal model, 
resulting in a more concentrated density. This is visualized in Figure 
\ref{fig:diff_fig}, where on the left the 
difference between crowd aversion penalties,
\begin{equation} 
\underbrace{\int_{\T}\varphi_r(x-y)m_{\text{non-local}}(t, 
y)dy}_{\text{Nonlocal crowd aversion}} - 
\underbrace{m_{\text{local}}(t,x)}_{\text{Local crowd aversion}},
\end{equation}
is plotted. On the right plot, we display
\begin{equation}
m_{\text{non-local}}(t,x) - 
m_{\text{local}}(t,x). 
\end{equation}
Note that even though the densities differ at $t = 1$, the two crowds 
experience approximately the same amount of crowding at that time 
$t=1$!
\begin{figure}[H]
\label{fig:init}
 \includegraphics[scale = 0.55, trim =  -1cm 7cm 0cm 7cm]{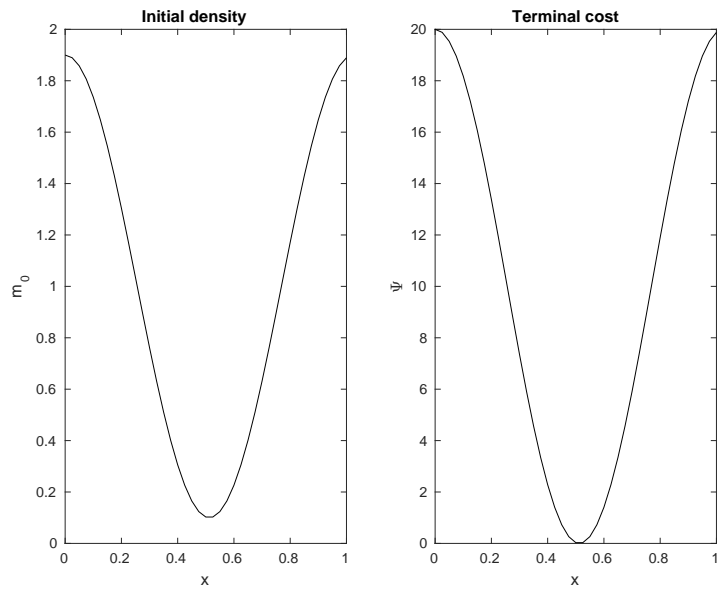}
 \caption{The initial density and terminal cost used in the simulations. 
Initially the pedestrians are crowded around $x=0$ but they will quickly 
flatten the density to heed their crowd aversion preferences. The 
low cost around $x=0.5$ will give the pedestrians an incentive to end up around 
this part of the domain at $t=1$.}
\end{figure}
\begin{figure}[H]
 \includegraphics[scale = 0.55, trim =  -1cm 7cm 0cm 7cm]{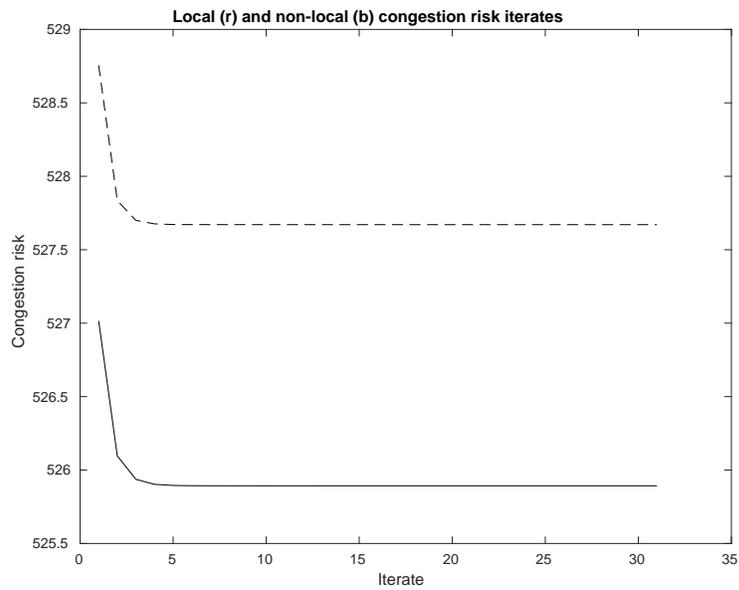}
 \label{fig:cr_fig}
 \caption{In each iteration of the GDM the control 
function $a$ is updated. The method is run until the risk 
measure, under 
local (dashed line) and nonlocal (solid line) crowd aversion, has converged to 
a minimum.}
\end{figure}
\begin{figure}[H]
\label{fig:density_fig}
 \includegraphics[scale = 0.55, trim =  -1cm 7cm 0cm 7cm]{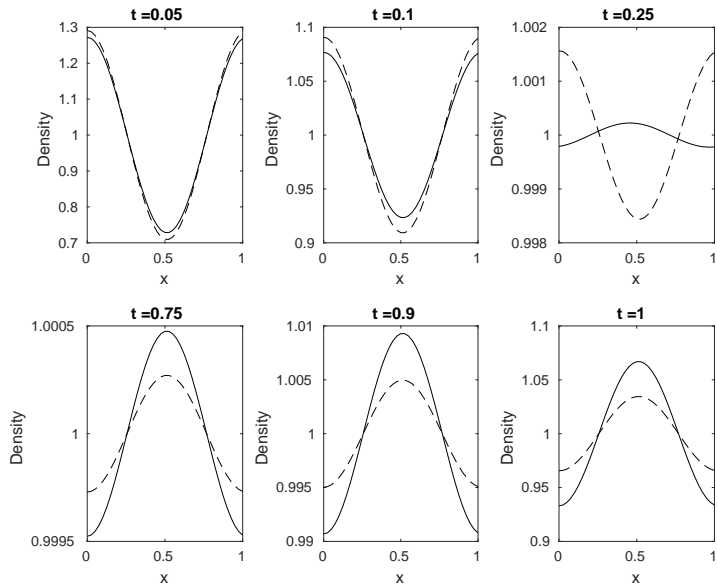}
 \caption{The optimally controlled density under local (dashed) and non-local 
(solid) crowd aversion at six instants.}
\end{figure}
\begin{figure}[H]
\label{fig:diff_fig}
 \includegraphics[scale = 0.55, trim =  -1cm 7cm 0cm 7cm]{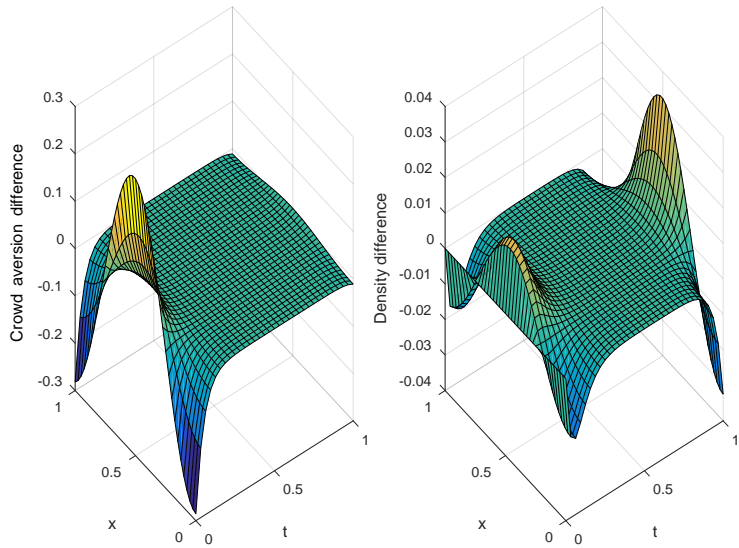}
 \caption{Differences (\textit{Non-local - Local}) between the two crowds: 
crowd aversion penalty (left plot) and density (right plot).}
\end{figure}

\section{Appendix}
\label{sec:appendix}

\subsection{Proof of Proposition \ref{prop:prop2}}
The proof extends the results in \cite{lachapelle2011mean} to an arbitrary 
finite number of crowds and to nonlocal crowd aversion.
\label{sec:proof2}
\begin{proof}
Let the entries in $\bar{\Lambda}$ be denoted by 
$\bar{\lambda}_{jk}$. For each $j\in\bM$,
\begin{equation}
\begin{aligned}
&J_{r,\bar{\Lambda}}(a) - J^{j,\text{det}}_{r,\Lambda}(a^j, a^{-j})
\\
&=
\sum_{k\neq j}\left(\int_{\rd}\int_0^T \frac{1}{2}|a^k(t,x)|^2m_k(t,x)dtdx + 
\int_{\rd}\Psi_k(x)m_k(T,x)dx\right)
\\
&+
\sum_{k,l = 1}^M\left(\int_{\rd}\int_0^T 
G_{\phi_r}[m_k]^T\bar{\lambda}_{kl}m_ldtdx\right) - 
\sum_{k=1}^M\left(\int_{\rd}\int_0^T 
G_{\phi_r}[m_k]^T\lambda_{kj}m_jdtdx\right).
\end{aligned}
\label{eq:proof_neop}
\end{equation}
Note that by symmetry of $\phi$, the indices of $G_\phi[m_k]$ and $m_l$ may be 
swapped under the integral sign and the last line of \eqref{eq:proof_neop} can 
be rewritten as
\begin{equation}
\begin{aligned}
&\sum_{\substack{k,l=1\\l,k \neq j}}^M \left(\int_{\rd}\int_0^T 
G_{\phi_r}[m_k]^T\bar{\lambda}_{kl}m_l dtdx\right)
\\
&+ \int_{\rd}\int_0^T\sum_{\substack{k=1\\k\neq 
j}}^M\Big( G_{\phi_r}[m_k]^T(\bar{\lambda}_{kj} + 
\lambda_{jk} - \lambda_{kj})m_j\Big) + 
G_{\phi_r}[m_j]^T(\bar{\lambda}_{jj} - \lambda_{jj})m_jdtdx.
\end{aligned}
\end{equation}
The last line vanishes since $\Lambda = \bar{\Lambda} + \bar{\Lambda}^T - 
\text{diag}(\bar{\Lambda})$ and $J_{r,\bar{\Lambda}}(a) - 
J^{j,\text{det}}_{r,\Lambda}(a^j, a^{-j})$ is independent of $(a^j, m_j)$.
Therefore the optimality of $\hat{a}$ implies that 
\begin{equation}
 \label{eq:proof_neop2}
 J^{j,\text{det}}_{r,\Lambda}(\hat{a}) 
 \leq 
J^{j,\text{det}}_{r,\Lambda}(a^j, \hat{a}^{-j}), 
\quad
\forall\ a_j\in\A_d,\ j\in\bM.
\end{equation}
Since \eqref{eq:proof_neop2} holds for all $j\in\bM$, $\hat{a}$ is a solution 
to dMFT-M.
\end{proof}

\subsection{Proof of Proposition \ref{prop:NgivesP2}}
\label{app:NgivesP2}
This proof is a variation of \cite[Proposition 
4.2.1]{lachapelle2010quelques} which extends it to an arbitrary finite
number of crowds and to nonlocal crowd aversion.
\begin{proof}
Let, for a given $\epsilon > 0$, $a_\epsilon^j$ be the first order perturbation 
of $\hat{a}^j$ for some arbitrary $w^j$ such that
\begin{equation}
a_\epsilon^j(t,x) := \hat{a}^j(t,x) + \epsilon w^j(t,x) \in\A_d.
\end{equation}
Let $m_j^\epsilon$ satisfy the constraints in \eqref{eq:Pn} with $a^j_\epsilon$ 
and let
\begin{equation}
m_j^\epsilon(t,x) := \hat{m}_j(t,x) + \epsilon h^\epsilon_j(t,x) + 
\mathcal{O}({h^\epsilon_j}^2).
\end{equation}
Then $h^\epsilon_j$ satisfies the equation
\begin{equation}
\label{eq:pdeh_2}
\left\{
\begin{aligned}
\frac{\partial h^\epsilon_j}{\partial t}(t,x) 
&=
\frac{1}{2}\tr{\nabla^2(\sigma\sigma^T h^\epsilon_j)(t,x)} - \nabla\cdot 
\left(b(t,x,\hat{a}^j(t,x))h^\epsilon_j(t,x)\right)
\\
&- \nabla\cdot\left( \frac{b(t,x,a^j_\epsilon(t,x)) 
- b(t,x,\hat{a}^j(t,x)}{\epsilon}m^\epsilon_j(t,x))\right) ,
\\
h^\epsilon_j(0,x) 
&= 
0.
\end{aligned}
\right.
\end{equation}
Let $\mathcal{J}^j : \epsilon \rightarrow 
J^{j,\text{det}}_{r,\Lambda}(a^j_\epsilon, \hat{a}^{-j})$. Since the 
functional is convex, $\hat{a}$ solves dMFT-M if and only if
\begin{equation}
\label{eq:proofprop3_1}
\frac{\partial \mathcal{J}^j_{r,\Lambda}}{\partial \epsilon}(0) = 0,
\quad 
\forall w^j \text{ such that } \hat{a}^j + \epsilon w^j \in \A_d,\ \forall\ 
j\in\bM.
\end{equation}
Condition \eqref{eq:proofprop3_1} is equivalent to
\begin{equation}
\label{eq:proofprop3_2}
\begin{aligned}
0 = &\int_{\rd}\Bigg[\int_0^T \Bigg(\hat{a}^j(t,x)\hat{m}_j(t,x)\cdot w^j(t,x) 
+ 
\frac{1}{2}|\hat{a}^j(t,x)|^2h^0_j(t,x) 
\\
&+ 
2\lambda_{jj}\left(\int_{\rd}\phi_r(x-y)\hat{m}_j(t,y)dy\right)h^0_j(t,x) 
\\
&+ 
\sum_{k\neq j}^M \lambda_{jk}
\left(\int_{\rd}\phi_r(x-y)\hat{m}_k(t,y)dy\right)h^0_k(t,x)\Bigg)dt + 
\Psi_j(x)h^0_j(T,x)\Bigg]dx
\end{aligned}
\end{equation}
where $h^0_j$ is the solution of \eqref{eq:pdeh_2} in the limit 
$\epsilon\rightarrow 0$. Recall that $\Lambda' = \frac{1}{2}(\Lambda + 
\text{diag}(\Lambda))$. Since $p$ 
satisfies the adjoint equation, $\Psi_j(x) = p_j(T,x)$ and 
\begin{equation}
\label{eq:long_eq_with_h0}
\begin{aligned}
&\int_{\rd} p_j(T,x) h^0_j(T,x)dx
\\
&=
\int_{\rd}\int_0^T \Bigg\{-\frac{1}{2}|a^j(t,x)|^2- 
2\lambda_{jj}\left(\int_{\rd}\phi_r(x-y)\hat{m}_j(t,y)dy\right)
\\
&- \sum_{k\neq j}^M 
\lambda_{jk}\left(\int_{\rd}\phi_r(x-y)\hat{m}_k(t,y)dy\right) -
b(t,x,\hat{a}^j(t,x))\cdot\nabla p_j(t,x) 
\\
&- \frac{1}{2}\tr{\sigma\sigma^T\nabla^2p_j(t,x)}\Bigg\}h^0_j(t,x)
\\
&+ \Bigg\{\frac{1}{2}\tr{\nabla^2(\sigma\sigma^T h^0_j)(t,x)}-
\nabla\cdot\left(b(t,x,\hat{a}^j(t,x))h^0_j(t,x)\right)
\\
& - \nabla\cdot\left(
D_{a^j}b(t,x,\hat{a}^j(t,x))w^j(t,x)\hat{m}_j(t,x)\right)\Bigg\}p_j(t,x)dtdx.
\end{aligned}
\end{equation}
Inserting \eqref{eq:long_eq_with_h0} into \eqref{eq:proofprop3_2} yields
\begin{equation}
\label{eq:last_equation}
\int_{\rd}\int_0^T \left(\hat{a}^j(t,x) + 
D_{a^j}b(t,x,\hat{a}^j(t,x))^T\nabla p_j(t,x)\right)\cdot 
w_j(t,x)\hat{m_j}(t,x) dxdt = 0.
\end{equation}
Note that this since \eqref{eq:last_equation} holds for all $j\in\bM$, 
$\hat{a}$ satisfies the optimality condition in Theorem \ref{thm:maxp2} and 
therefore $\hat{a}$ is a 
solution to \eqref{eq:Pn} by 
Theorem \ref{thm:maxp2}.
\end{proof}

\bibliographystyle{agsm}
\bibliography{references}

\end{document}